\newcommand{\Hmm}[1]{\leavevmode{\marginpar{\tiny%
$\hbox to 0mm{\hspace*{-0.5mm}$\leftarrow$\hss}%
\vcenter{\vrule depth 0.1mm height 0.1mm width \the\marginparwidth}%
\hbox to
0mm{\hss$\rightarrow$\hspace*{-0.5mm}}$\\\relax\raggedright #1}}}
\newtheorem{theorem}{Theorem}[section]
\newtheorem{lemma}[theorem]{Lemma}
\newtheorem{corollary}[theorem]{Corollary}
\newtheorem{remark}[theorem]{Remark}
\newtheorem{proposition}[theorem]{Proposition}
\begin{document}

\title[Existence of ground state solutions to some Nonlinear Schr\"{o}dinger equations on lattice graphs]{Existence of  ground state solutions to some Nonlinear Schr\"{o}dinger equations on lattice graphs}

\author{Bobo Hua}
\address{Bobo Hua: School of Mathematical Sciences, LMNS, Fudan University, Shanghai 200433, China; ; Shanghai Center for Mathematical Sciences, Jiangwan Campus, Fudan University, No. 2005 Songhu Road, Shanghai 200438, China.}
\email{bobohua@fudan.edu.cn}

\author{Wendi Xu}
\address{Wendi Xu: School of Mathematical Sciences, Fudan University, Shanghai 200433, China}
\email{wdxu19@fudan.edu.cn}

\begin{abstract}
	In this paper, we study the  nonlinear Schr\"{o}dinger equation
	$	-\Delta u+V(x)u=f(x,u) $
on the lattice graph $\mathbb{Z}^{N}$. Using the Nehari method, we prove that when $f$ satisfies some growth conditions and the potential function $V$ is periodic or bounded, the above equation admits a ground state solution. Moreover, we extend our results from $\mathbb{Z}^{N}$ to\label{key} quasi-transitive graphs.

\end{abstract}
\par
\maketitle

\bigskip

\section{introduction}
The nonlinear Schr{\"o}dinger type equations of the form
\begin{equation}\label{1}
	-\Delta u+V(x)u=f(x,u), \quad u \in H^{1}(\Omega),
\end{equation}
where $\Omega$ is a bounded domain in $\mathbb{R}^{N}$ or the whole Euclidean space, have been extensively studied during the past several decades. See for examples \cite{Euclidean1,Euclidean2,Euclidean3,linkingthm,LYY,Cao,Struwe,Wzq1} and the references therein. Since the above equations have variational structures, one can use the critical point theory to study them. The corresponding variational functional is 
$$ \Phi(u)= \frac{1}{2}\int_{\mathbb{R}^{N}} |\nabla u|^{2}+V(x)u^{2} \, dx
-\int_{\mathbb{R}^{N}}F(x,u) \, dx,$$
where $F(x,u)=\int_{0}^{u} f(x,t) dt$. The solutions corresponding to the least positive critical value of $\Phi$ are called ground state solutions of (\ref{1}).

In \cite{Wzq}, Y.Q. Li, Z.Q. Wang and J. Zeng considered two cases of potential: one is periodic and the other is bounded, namely $\lim_{|x| \to \infty} V(x)=\sup_{\mathbb{R}^{N}}V<+\infty$. 
In both cases, without compact embedding for $H^{1}(\mathbb{R}^{N})$, they established, under a Nehari type condition (see \cite{Nehari1,Nehari2}) and a super-quadratic condition on the nonlinearity $f$, the existence of ground state solutions to (\ref{1}). In their note \cite{Andrzej}, S. Andrzej and T. Weth presented a unified approach to the Nehari method and proved results similar to Theorem 2.1 and Theorem 3.1 in \cite{Wzq}.

Recently, many researchers paid attention to various differential equations on graphs.
In \cite{GLW1,GLW2,GLW3}, A. Grigor’yan, Y. Lin and Y.Y. Yang applied variational methods on graphs to various types of nonlinear elliptic equations. In particular, in \cite{GLW3}, they proved the existence of a strictly positive solution to $-\Delta u + Vu =f(x,u)$ if  $V(x) \to \infty $ as ${\rm d}(x,x_{0}) \to \infty$ for some $x_{0}\in \mathbb{V}$, where d$(x,x_{0})$ is the combinatorial distance, i.e., the minimal number of edges contained in paths connecting $x$ and $x_{0}$.
Via the Nehari method, Zhang, N. and Zhao, L., see \cite{ZL}, proved that, if $a(x)\to \infty$ as d$(x,x_{0})\to \infty$, then, for any $\lambda>1$,  the nonlinear Schrödinger equation $- \Delta u +(\lambda a(x)+1)u = |u|^{p-1}u$   admits a ground state solution on locally finite graphs. Both \cite{GLW3} and \cite{ZL} assumed that the potential functions tend to infinity when ${\rm d}(x,x_{0}) \to \infty$, while we prove the results without such assumptions for special graphs.

In this paper, we consider the Schrödinger type equation 
$$-\Delta u+V(x)u=f(x,u)$$ on graphs. We first introduce the basic setting on graphs. Let $G=(\mathbb{V},\mathbb{E})$ be a simple, undirected, locally finite graph, where $\mathbb{V}$ is the set of vertices and $\mathbb{E}$ is the set of edges. Two vertices $x$ and $y$ are called neighbours, denoted by $x\sim y$, if there is an edge connecting them, i.e., $\{x,y\}\in \mathbb{E}$. A graph is called locally finite if each vertex has finitely many neighbours.

Denote the set of functions on $G$ by $C(V)$.
The Laplacian on $G=(\mathbb{V},\mathbb{E})$ is defined as, for any function $u\in C(V)$ and $x\in \mathbb{V}$,
\begin{displaymath}
		\Delta u(x) :=  \sum_{y \in \mathbb{V},y \sim x}\big(u(y)-u(x)\big).
\end{displaymath}
Let $C_{c}(\mathbb{V})$ be the set of all functions of finite support, and $H^{1}(\mathbb{V})$ be the completion of $C_{c}(\mathbb{V})$ under the norm
\begin{displaymath}
	\|u\|_{H^{1}}:= \Big( \frac{1}{2}\sum_{x\in \mathbb{V}}\sum_{y\sim x } \big(u(y)-u(x)\big)^{2}+\sum_{x\in \mathbb{V}} u^{2}(x) \Big)^{1/2}.
\end{displaymath}
We denote by $\mathbb{Z}^{N}$ the standard lattice graph with the set of vertices
$$ \{x=(x_{1},\cdots,x_{N}):x_{i}\in \mathbb{Z}, 1 \leq i \leq N\}$$
and the set of edges
$$\Big\{ \{ x,y\}: x,y\in\mathbb{Z}^{N},\sum_{i=1}^{N}|x_{i}-y_{i}|=1\Big\}.$$ 

For $T\in \mathbb{N}$, a function $g$ on $\mathbb{Z}^{N}$ is called T-periodic if $g(x+Te_{i})=g(x),\forall x\in\mathbb{Z}^{N},1\leq i \leq N$, where $e_{i}$ is the unit vector in the i-th coordinate.

We are concerned with the existence of ground state solutions of the following Schr\"{o}dinger equation:
\begin{equation}\label{maineq}
	\left \{\begin{array}{ll}
		-\Delta u+V(x)u=f(x,u), & x \in \mathbb{V} \\	
		u\in H^{1}(\mathbb{V})&\,
	\end{array}\right. 
\end{equation}
\begin{theorem}\label{thm1}
	Consider the Schr\"{o}dinger equation (\ref{maineq}) on $\mathbb{Z}^{N}$. Suppose that the nonlinearity $f:\mathbb{Z}^{N}\times\mathbb{R}\to \mathbb{R}$  is continuous with respect to $u$ and satisfies the growth condition 
	\begin{equation}\label{g.c.}
		|f(x,u)| \leq a(|u|+|u|^{q-1}) \quad \textrm{ for some }  a>0 \textrm{ and } q>2.
	\end{equation} 
Moreover, we assume the following conditons hold:\\
	(i)\, $V(x)$ and $f(x,u)$ are T-periodic in  $x$ for any $u\in \mathbb{R}$
	and $V(x)>0$ for all $x\in\mathbb{Z}^{N}$.\\
	(ii)\,$f(x,u) = o(u)$ uniformly in $x$ as $u \to 0$.\\ 
	(iii)\,$u\mapsto \frac{f(x,u)}{|u|}$ is strictly increasing on $(-\infty,0)$ and $(0,+\infty)$ respectively.\\   
	(iv)\,$ \frac{F(x,u)}{u^2} \to \infty $ uniformly in $x$ as $|u| \to \infty $ with $F(x,u)= \int_{0}^{u} f(x,s) \,d s $.\\
	Then the equation (\ref{maineq}) has a ground state solution.
\end{theorem}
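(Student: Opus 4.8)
The plan is to run the periodic Nehari--Szulkin--Weth variational scheme (as in \cite{Andrzej}) on the discrete energy space and to replace the continuous concentration--compactness step by a much simpler discrete one, exploiting the $T$-periodicity of $V$ and $f$ through lattice translations. First I would note that, being positive and $T$-periodic, $V$ takes only finitely many values, so $0<\inf_{\mathbb{Z}^N}V\le\sup_{\mathbb{Z}^N}V<\infty$; hence $\|u\|^2:=\frac12\sum_{x}\sum_{y\sim x}\big(u(y)-u(x)\big)^2+\sum_x V(x)u(x)^2$ is a norm on $H^1(\mathbb{V})$ equivalent to $\|\cdot\|_{H^1}$, with associated inner product $\langle\cdot,\cdot\rangle$, and (\ref{maineq}) is the Euler--Lagrange equation of $\Phi(u)=\tfrac12\|u\|^2-\sum_x F(x,u(x))$. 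Using the growth bound (\ref{g.c.}), hypothesis (ii), and the fact that the discrete embeddings $\ell^2(\mathbb{V})\hookrightarrow\ell^q(\mathbb{V})$, $q\ge2$, are norm-nonincreasing, one checks in the standard way that $\Phi\in C^1(H^1(\mathbb{V}),\mathbb{R})$ with $\Phi'(u)\varphi=\langle u,\varphi\rangle-\sum_x f(x,u(x))\varphi(x)$, and that the critical points of $\Phi$ are exactly the $H^1$-solutions of (\ref{maineq}); by local finiteness these are also pointwise solutions.

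For $u\ne0$ consider $g_u(t):=\Phi(tu)$ on $t>0$: by (ii) it is positive near $0$, by (iv) it tends to $-\infty$ as $t\to\infty$, and by the monotonicity (iii) its positive critical point $t_u$ is unique and is its global maximum. Equivalently, each ray $\{tu:t>0\}$ meets the Nehari set $\mathcal{N}:=\{u\ne0:\Phi'(u)u=0\}$ exactly once, at the point where $\Phi$ is largest along that ray, whence $c:=\inf_{\mathcal{N}}\Phi=\inf_{\|w\|=1}\max_{t>0}\Phi(tw)$, with $c>0$ by (ii) together with the embeddings. Since $f$ is only continuous, $\mathcal{N}$ need not be a $C^1$-manifold, so I would use the Szulkin--Weth device: the radial map $m(w)=t_w w$ is a homeomorphism of the unit sphere onto $\mathcal{N}$, $\Phi\circ m\in C^1$, and applying Ekeland's variational principle to $\Phi\circ m$ on the sphere and pushing the resulting minimizing sequence forward through $m$ produces $(u_n)\subset\mathcal{N}$ with $\Phi(u_n)\to c$, $\Phi'(u_n)\to0$ in $H^{-1}$, and $\Phi'(u_n)u_n=0$.

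A routine argument using (iv) and $u_n\in\mathcal{N}$ shows $(u_n)$ is bounded in $H^1(\mathbb{V})$. The key point is that vanishing cannot occur: if $\sup_{x\in\mathbb{Z}^N}|u_n(x)|\to0$, then $\|u_n\|_q^q\le\|u_n\|_\infty^{q-2}\|u_n\|_2^2\to0$, and inserting this, (\ref{g.c.}) and (ii) into $0=\Phi'(u_n)u_n=\|u_n\|^2-\sum_x f(x,u_n)u_n$ forces $\|u_n\|\to0$, contradicting $c>0$. Hence $|u_n(x_n)|\ge\delta>0$ for some $x_n\in\mathbb{Z}^N$ along a subsequence; writing $x_n=k_n+r_n$ with $k_n\in T\mathbb{Z}^N$ and $r_n$ in a fixed finite fundamental domain, and using that $\Phi$ and $\Phi'$ are invariant under $u\mapsto u(\cdot+k_n)$ (by $T$-periodicity of $V$ and $f$), the translate $\tilde u_n:=u_n(\cdot+k_n)$ is again a bounded Palais--Smale sequence on $\mathcal{N}$ at level $c$ with $|\tilde u_n(r_n)|\ge\delta$. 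Passing to a subsequence, $r_n\equiv r$ is constant and $\tilde u_n\rightharpoonup u$ in $H^1(\mathbb{V})$, hence pointwise, so $|u(r)|\ge\delta$ and $u\ne0$. For $\varphi\in C_c(\mathbb{V})$ the quadratic part of $\Phi'(\tilde u_n)\varphi$ is a finite sum and the nonlinear part passes to the limit by pointwise convergence, (\ref{g.c.}) and dominated convergence, giving $\Phi'(u)=0$; thus $u\in\mathcal{N}$, so $\Phi(u)\ge c$. Conversely, (iii) makes $h(x,s):=\tfrac12 f(x,s)s-F(x,s)\ge0$, and since $\Phi(w)-\tfrac12\Phi'(w)w=\sum_x h(x,w(x))$, Fatou's lemma applied to $\sum_x h(x,\tilde u_n(x))$ yields $\Phi(u)\le\liminf_n\Phi(\tilde u_n)=c$. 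Therefore $\Phi(u)=c$ and $u$ is a ground state solution of (\ref{maineq}).

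I expect the compactness step to be the main obstacle: on the noncompact lattice one must rule out vanishing (easy here, thanks to $\ell^2\hookrightarrow\ell^q$) and then re-center the Palais--Smale sequence along $T\mathbb{Z}^N$ so that the weak limit is nonzero; the fact that $f$ is merely continuous, which prevents one from working directly on the possibly non-smooth Nehari manifold, is what makes the Szulkin--Weth reformulation necessary rather than the classical Nehari method. The remaining inputs — $C^1$-smoothness of $\Phi$, the fiber-map analysis, boundedness of the Palais--Smale sequence, and passage to the limit in $\Phi'$ — are the discrete analogues of routine arguments, in each case simplified by local finiteness and by the chain of embeddings $H^1(\mathbb{V})\hookrightarrow\ell^2(\mathbb{V})\hookrightarrow\ell^q(\mathbb{V})$.
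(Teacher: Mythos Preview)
Your strategy is the paper's strategy: Szulkin--Weth reduction to the unit sphere, Ekeland's principle to produce a Palais--Smale sequence $(u_n)\subset\mathcal{N}$ at level $c$, a discrete interpolation/translation argument along $T\mathbb{Z}^N$ to defeat loss of compactness, and the Fatou inequality for $\tfrac12 f(x,s)s-F(x,s)\ge0$ to pin down the level. The passage to the limit in $\Phi'$ and the identification $\Phi(u)=c$ are handled exactly as in the paper.

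The one step you underestimate is the boundedness of $(u_n)$. With only the superquadratic condition (iv) and no Ambrosetti--Rabinowitz hypothesis, ``routine'' is too optimistic: the identity $\Phi(u_n)-\tfrac12\Phi'(u_n)u_n=\sum_x h(x,u_n)$ does not by itself control $\|u_n\|$. The paper (following \cite{Andrzej}) argues by contradiction via the normalized sequence $v_n=u_n/\|u_n\|$ and needs the \emph{same} periodicity/translation machinery you describe later: first the Nehari maximality $\Phi(sv_n)\le\Phi(u_n)\to c$ rules out $v_n\to0$ in $\ell^q$; then the interpolation $\|v_n\|_q\le\|v_n\|_2^{2/q}\|v_n\|_\infty^{1-2/q}$ produces points $y_n$ with $|v_n(y_n)|\ge\delta$; then a $T\mathbb{Z}^N$-translation forces the weak limit of $v_n$ to be nonzero at some $x_0$; and finally (iv) at $x_0$ gives the contradiction $\Phi(u_n)/\|u_n\|^2\to-\infty$. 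So the translation argument enters twice---once for boundedness, once for compactness---and your proposal only invokes it for the second. Once you acknowledge this, your outline is complete and coincides with the paper's proof.
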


We also consider the case that $V$ is not periodic but bounded in the sense that $\lim_{|x|\to \infty} V(x)=\sup_{\mathbb{Z}^{N}} V < +\infty$.
\begin{theorem} \label{thm2}
		Consider the Schr\"{o}dinger equation (\ref{maineq}) on $\mathbb{Z}^{N}$. Suppose that the nonlinearity $f=f(u)$ is a continuous function satisfying the growth condition (\ref{g.c.}) and the following conditions:\\
	(i)\,$f(u) = o(u)$  as $u \to 0$.\\
	(ii)\,$u\mapsto \frac{f(u)}{|u|}$ is strictly increasing on $(-\infty,0)$ and $(0,+\infty)$ respectively.\\
	(iii)\,$\frac{F(u)}{u^2} \to \infty$ as $|u| \to \infty$.\\
	Moreover, we assume that $V$ is a bounded potential well, i.e., $$0 <V_{0}\leq  \inf_{\mathbb{Z}^{N}} V(x) \leq \sup_{\mathbb{Z}^{N}} V(x) = V_{\infty}< \infty \quad \text{with }\quad V_{\infty}=\lim_{|x| \to \infty} V(x). $$    
	Then the equation (\ref{maineq}) has a ground state solution.
\end{theorem}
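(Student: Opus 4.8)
The plan is to run the Nehari/Szulkin--Weth variational scheme on the lattice and to make up for the absence of a compact Sobolev embedding on $\mathbb{Z}^{N}$ by comparing the energy of \eqref{maineq} with that of the \emph{problem at infinity} $-\Delta u+V_{\infty}u=f(u)$, whose constant potential makes it translation invariant and, by Theorem \ref{thm1}, solvable. First I would fix the functional setting: since $0<V_{0}\le V(x)\le V_{\infty}<\infty$, the norm $\|u\|_{V}^{2}:=\tfrac12\sum_{x}\sum_{y\sim x}(u(y)-u(x))^{2}+\sum_{x}V(x)u(x)^{2}$ is equivalent to $\|\cdot\|_{H^{1}}$, so $\Phi(u)=\tfrac12\|u\|_{V}^{2}-\sum_{x\in\mathbb{Z}^{N}}F(u(x))$ is a well-defined $C^{1}$ functional on $H^{1}(\mathbb{Z}^{N})$ whose nonzero critical points solve \eqref{maineq}; since $|u(x)|\le\|u\|_{\ell^{2}}\le\|u\|_{H^{1}}$, point evaluation is $H^{1}$-continuous, so weak convergence in $H^{1}(\mathbb{Z}^{N})$ entails pointwise convergence. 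Conditions (i)--(iii) are precisely the Szulkin--Weth hypotheses (in particular (ii) and (iii) force $f(u)/|u|\to\infty$ as $|u|\to\infty$): each fiber $t\mapsto\Phi(tu)$, $u\ne 0$, has a unique positive maximizer, the Nehari manifold $\mathcal{N}=\{u\ne0:\Phi'(u)u=0\}$ is a $C^{1}$-manifold, $c:=\inf_{\mathcal{N}}\Phi>0$ and $\inf_{\mathcal{N}}\|u\|_{V}>0$, and Ekeland's principle applied to the functional reduced to the unit sphere produces a bounded Palais--Smale sequence $(u_{n})\subset\mathcal{N}$ with $\Phi(u_{n})\to c$, $\Phi'(u_{n})\to0$.

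Next I would bring in the limit problem. Put $\Phi_{\infty}(u)=\tfrac12\|u\|_{V_{\infty}}^{2}-\sum_{x}F(u(x))$, let $\mathcal{N}_{\infty}$ be its Nehari manifold and $c_{\infty}:=\inf_{\mathcal{N}_{\infty}}\Phi_{\infty}$. As $V_{\infty}$ is a constant (hence $1$-periodic) positive potential and $f$ satisfies the hypotheses, Theorem \ref{thm1} supplies a ground state $w_{\infty}\in\mathcal{N}_{\infty}$ with $\Phi_{\infty}(w_{\infty})=c_{\infty}$. Since $V\le V_{\infty}$ pointwise, $\Phi(tu)\le\Phi_{\infty}(tu)$ for all $t>0$; evaluating along the fiber through $w_{\infty}$ gives $c\le\max_{t>0}\Phi(tw_{\infty})\le\max_{t>0}\Phi_{\infty}(tw_{\infty})=c_{\infty}$. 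If $V\equiv V_{\infty}$ the problem is already covered by Theorem \ref{thm1}; otherwise I choose $x_{0}$ with $V(x_{0})<V_{\infty}$ and translate $w_{\infty}$ (legitimate because $\Phi_{\infty}$ is translation invariant) so that $w_{\infty}(x_{0})\ne0$, making the middle inequality strict: $c<c_{\infty}$. This strict gap is the device that will rule out loss of compactness.

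Then I would run the compactness argument on $(u_{n})$. Up to a subsequence $u_{n}\rightharpoonup u$ in $H^{1}$, hence $u_{n}\to u$ pointwise. If $u\ne0$, passing to the limit in $\Phi'(u_{n})\varphi\to0$ for $\varphi\in C_{c}(\mathbb{Z}^{N})$ (pointwise convergence plus \eqref{g.c.}) gives $\Phi'(u)=0$, so $u\in\mathcal{N}$, and since $s\mapsto\tfrac12 f(s)s-F(s)\ge0$ by (ii), Fatou's lemma gives
\[ \Phi(u)=\sum_{x}\Big(\tfrac12 f(u(x))u(x)-F(u(x))\Big)\le\liminf_{n}\Big(\Phi(u_{n})-\tfrac12\Phi'(u_{n})u_{n}\Big)=c, \]
so $\Phi(u)=c$ and $u$ is a ground state. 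It remains to exclude $u=0$. Assuming $u_{n}\rightharpoonup0$, I would first show $\sum_{x}(V(x)-V_{\infty})u_{n}(x)^{2}\to0$ and $\|\Phi'(u_{n})-\Phi_{\infty}'(u_{n})\|\to0$ --- split each sum at a radius beyond which $|V-V_{\infty}|$ is small and use pointwise vanishing together with the $\ell^{2}$-bound on $(u_{n})$ --- so that $(u_{n})$ is also Palais--Smale for $\Phi_{\infty}$ at level $c$. A discrete vanishing lemma (if $\sup_{y}\sum_{{\rm d}(x,y)\le1}|u_{n}(x)|^{2}\to0$ then $\|u_{n}\|_{\ell^{\infty}}\to0$, hence, by (i) and \eqref{g.c.}, $\sum_{x}f(u_{n})u_{n}\to0$ and $\|u_{n}\|_{V_{\infty}}\to0$, contradicting $c>0$) then yields $\delta>0$ and $y_{n}$ with $\sum_{{\rm d}(x,y_{n})\le1}|u_{n}(x)|^{2}\ge\delta$; pointwise vanishing forces $|y_{n}|\to\infty$. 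By translation invariance of $\Phi_{\infty}$, $v_{n}:=u_{n}(\cdot+y_{n})$ is again Palais--Smale for $\Phi_{\infty}$ at level $c$ with $v_{n}\rightharpoonup v\ne0$, so $v$ is a nontrivial critical point of $\Phi_{\infty}$, i.e.\ $v\in\mathcal{N}_{\infty}$; but Fatou's lemma gives $\Phi_{\infty}(v)\le\liminf_{n}\big(\Phi_{\infty}(v_{n})-\tfrac12\Phi_{\infty}'(v_{n})v_{n}\big)=c$, so $c_{\infty}\le\Phi_{\infty}(v)\le c$, contradicting $c<c_{\infty}$. Hence $u\ne0$ and the proof is complete.

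The step I expect to be the main obstacle is exactly this last one: converting the weak limit into a genuine minimizer without a compact embedding $H^{1}(\mathbb{Z}^{N})\hookrightarrow\ell^{q}$. On the lattice the analytic lemmas are lighter (the vanishing lemma is almost immediate since $\ell^{2}\hookrightarrow\ell^{\infty}$ discretely), but the substantive point is the comparison with $\Phi_{\infty}$ and the strict inequality $c<c_{\infty}$, which is precisely what prevents energy from escaping to infinity.
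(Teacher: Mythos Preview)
Your proposal is correct and follows essentially the same route as the paper: both set up the Nehari framework, invoke Theorem \ref{thm1} on the constant-potential limit problem to secure the strict gap $c<c_{\infty}$, extract a bounded Palais--Smale sequence at level $c$, and rule out vanishing of the weak limit by showing that escape to infinity would produce a nontrivial critical point of $\Phi_{\infty}$ with energy at most $c<c_{\infty}$. The only cosmetic difference is that the paper argues directly that the concentration points $z_{n}$ must stay bounded, whereas you first pass to $\Phi_{\infty}$ and then translate; both lead to the same contradiction.
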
 

\begin{remark}\label{rmk1}
	(1) The potential function $V(x) \to \infty $ as ${\rm d}(x,x_{0}) \to \infty$ is assumed in \cite{GLW3,ZL} to prove the existence of ground state solutions to $-\Delta u+V(x)u=f(x,u)$ on locally finite graphs. We don't need this assumption on lattice graphs.\\
    (2) The above two theorems are discrete analogs of the results in \cite{Andrzej}. However, the Sobolev embedding in our setting is different from that in the Euclidean space, which allows us to remove the constraint that $q<\frac{2N}{N-2}$(subcritical) in the continuous case.  
 \end{remark}
We follow the proofs in the continuous setting \cite{Andrzej} to prove the theorems. In particular, we set up the framework of Nehari methods on graphs and adopt it to prove the existence of ground state solutions to the nonlinear Schr\"{o}dinger equation (\ref{maineq}).

Moreover, we extend the results from lattice graphs to quasi-transitive graphs (see \cite{woess}). $G$ is called a quasi-transitive graph if there are finitely many orbits for the action of $Aut(G)$ on $G$ where $Aut(G)$ is the set of automorphisms of $G$.

\begin{theorem}\label{thm3}
	Let $G=(\mathbb{V},\mathbb{E})$ be a quasi-transitive graph, $\Gamma \leq Aut(G)$. Suppose that the action of  $\Gamma$ on $G$ has finitely many orbits.	Then Theorem \ref{thm2}  holds on $G$. By replacing T-periodicity of $f$ and $V$ with $\Gamma$-invariance, Theorem \ref{thm1} holds on $G$.	
\end{theorem}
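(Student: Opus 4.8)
The plan is to carry the proofs of Theorems \ref{thm1} and \ref{thm2} over to $G$ essentially verbatim, after isolating the only two places where the lattice structure of $\mathbb{Z}^{N}$ is used: first, the uniform local finiteness of $\mathbb{Z}^{N}$, which underlies the Sobolev embeddings and the equivalence of the energy norm with $\|\cdot\|_{H^{1}}$; and second, the cocompact action of the translation group $T\mathbb{Z}^{N}$, which supplies the compactness needed to pass to a nonzero weak limit along a minimizing sequence. On a quasi-transitive graph $G$ with $\Gamma\le\mathrm{Aut}(G)$ having finitely many orbits, both features survive: $\Gamma$ takes over the role of $T\mathbb{Z}^{N}$, a fixed finite set $\mathcal{D}\subset\mathbb{V}$ of orbit representatives replaces a fundamental domain, and ``$\Gamma$-invariant'' replaces ``$T$-periodic.''

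First I would set up the analytic framework on $G$. Since $G$ is locally finite and $\Gamma$ has finitely many orbits, vertices in a common orbit have equal degree, so $D:=\sup_{x\in\mathbb{V}}\deg(x)<\infty$. Then $\tfrac12\sum_{x}\sum_{y\sim x}\big(u(y)-u(x)\big)^{2}\le 2D\|u\|_{\ell^{2}}^{2}$, so $H^{1}(\mathbb{V})=\ell^{2}(\mathbb{V})$ with equivalent norms; and since $\|u\|_{\ell^{q}}\le\|u\|_{\ell^{2}}$ for every $q\ge2$, one gets $H^{1}(\mathbb{V})\hookrightarrow\ell^{q}(\mathbb{V})$ for all $q\ge2$ with no upper restriction on $q$, which is exactly why the subcriticality constraint of the Euclidean case disappears (Remark \ref{rmk1}(2)). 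Because $V$ is bounded between two positive constants (the extrema of $V$ over $\mathcal{D}$ in the setting of Theorem \ref{thm1}, or $V_{0}$ and $V_{\infty}$ in that of Theorem \ref{thm2}), the quadratic form $\|u\|^{2}:=\tfrac12\sum_{x}\sum_{y\sim x}\big(u(y)-u(x)\big)^{2}+\sum_{x}V(x)u(x)^{2}$ is equivalent to $\|u\|_{H^{1}}^{2}$, and by \eqref{g.c.} the functional $\Phi(u)=\tfrac12\|u\|^{2}-\sum_{x}F(x,u(x))$ is well defined and $C^{1}$ on $H^{1}(\mathbb{V})$. With this in place, the abstract Nehari-manifold apparatus of \cite{Andrzej}---the ground-state level $c$, the homeomorphism between the Nehari manifold and the unit sphere, the boundedness of the relevant Palais--Smale sequences---does not see the graph and applies unchanged.

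Next, the $\Gamma$-invariant case (the analog of Theorem \ref{thm1}). For $g\in\mathrm{Aut}(G)$ the map $u\mapsto u\circ g$ is an isometry of $H^{1}(\mathbb{V})$ preserving the counting measure, so $\Gamma$-invariance of $V$ and $f$ makes $\Phi$ and $\Phi'$ $\Gamma$-invariant and carries Palais--Smale sequences to Palais--Smale sequences. The sole compactness ingredient is a non-vanishing lemma: if $(u_{n})$ is bounded in $H^{1}(\mathbb{V})$ and $\sup_{x\in\mathbb{V}}|u_{n}(x)|\to0$, then $\sum_{x}|u_{n}(x)|^{q}\le\big(\sup_{x}|u_{n}(x)|\big)^{q-2}\|u_{n}\|_{\ell^{2}}^{2}\to0$ for each $q>2$, whence \eqref{g.c.} together with $f(x,u)=o(u)$ give $\sum_{x}F(x,u_{n}(x))\to0$ and $\sum_{x}f(x,u_{n}(x))u_{n}(x)\to0$; combined with the monotonicity and superquadratic hypotheses this prevents the ground-state level from being approached along such a sequence, so a minimizing Palais--Smale sequence has $\liminf_{n}\sup_{x}|u_{n}(x)|=:\delta>0$. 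Choosing $x_{n}$ with $|u_{n}(x_{n})|\ge\delta/2$ and $g_{n}\in\Gamma$ with $g_{n}x_{n}\in\mathcal{D}$, the translate $v_{n}:=u_{n}\circ g_{n}^{-1}$ is again a minimizing Palais--Smale sequence with $|v_{n}(g_{n}x_{n})|\ge\delta/2$; passing to a subsequence along which $g_{n}x_{n}\equiv x_{0}\in\mathcal{D}$ and $v_{n}\rightharpoonup v$ in $H^{1}(\mathbb{V})$ (equivalently pointwise) yields $|v(x_{0})|\ge\delta/2$, so $v\ne0$, and a Brezis--Lieb-type splitting together with weak lower semicontinuity then shows that $v$ is a critical point with $\Phi(v)=c$.

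Finally, the bounded-potential case (the analog of Theorem \ref{thm2}), where $f=f(u)$ is automatically $\Gamma$-invariant while $V$ is not. Let $\Phi_{\infty}$ be the energy with $V$ replaced by the constant $V_{\infty}$; it is $\Gamma$-invariant, so by the previous step it admits a ground state with level $c_{\infty}>0$. From $V\le V_{\infty}$ one gets $c\le c_{\infty}$, with strict inequality unless $V\equiv V_{\infty}$ (the latter being covered by the $\Gamma$-invariant case). For a minimizing Palais--Smale sequence $(u_{n})$ of $\Phi$, boundedness gives a subsequence with $u_{n}\rightharpoonup u$ pointwise; since $V(x)\to V_{\infty}$ and $(u_{n})$ is bounded in $\ell^{2}$, one has $\sum_{x}(V_{\infty}-V(x))u_{n}(x)^{2}\to0$ and $\sum_{x}(V_{\infty}-V(x))^{2}u_{n}(x)^{2}\to0$ whenever $u_{n}\to0$ pointwise (split $\mathbb{V}$ into a finite set, on which the sums vanish in the limit, and its complement, on which $|V-V_{\infty}|$ is small). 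Hence if $u=0$ then $(u_{n})$ is asymptotically a Palais--Smale sequence for the $\Gamma$-invariant functional $\Phi_{\infty}$ at level $c$, and the compactness established in the previous step produces a nonzero critical point of $\Phi_{\infty}$ with energy $\le c$, forcing $c\ge c_{\infty}$ and contradicting $c<c_{\infty}$; therefore $u\ne0$, and one concludes as in \cite{Andrzej} that $u$ is a ground state of $\Phi$ on $G$. The main obstacle is precisely these two concentration-compactness inputs---the non-vanishing lemma combined with the $\Gamma$-translation trick, which is where the ``finitely many orbits'' hypothesis is indispensable since it returns the translated sequence to the fixed finite region $\mathcal{D}$, and the strict comparison $c<c_{\infty}$ together with the exclusion of escaping mass; the rest is a faithful transcription of \cite{Andrzej}.
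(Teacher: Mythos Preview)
Your proposal is correct and follows the same approach as the paper: the paper's proof is a three-line sketch that says to replace $\mathbb{Z}^{N}$-translations by the $\Gamma$-action and the cube $[0,T)^{N}\cap\mathbb{Z}^{N}$ by a finite set $\Omega=\{z_{1},\dots,z_{m}\}$ of orbit representatives (your $\mathcal{D}$), after which the proofs of Theorems~\ref{thm1} and~\ref{thm2} go through verbatim. Your write-up simply supplies the details the paper omits; the only cosmetic difference is that for the Theorem~\ref{thm2} analog you argue directly that a Palais--Smale sequence with vanishing weak limit is asymptotically Palais--Smale for $\Phi_{\infty}$, whereas the paper phrases the same dichotomy as ``$\{z_{n}\}$ bounded versus $|z_{n}|\to\infty$.''
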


This paper is organized as follows. In Section 2, we recall the setting of function spaces on graphs. Then we clarify the functional framework for Nehari manifold in Section 3. The proofs of Theorem \ref{thm1} and \ref{thm2} are given in Section 4 and Section 5 respectively. Since the proofs of theorems on quasi-transitive graphs are similar to that on lattice graphs, we give a brief proof at the end of Section 5. Finally, the detailed proofs of theorems in Section 3 are contained in the Appendix.

\section{Preliminaries}
 Let $G=(\mathbb{V},\mathbb{E})$ be a locally finite graph. In this paper, we always assume that
 \begin{equation}\label{bdd geo}
 	\#\{y\in \mathbb{V}:y\sim x\} \leq C,\quad \forall x \in \mathbb{V},
 \end{equation} 
where $C$ is a uniform constant. For any function $u,v\in C(\mathbb{V})$, define the associated gradient form as
\begin{displaymath}
	\Gamma (u,v)(x) := \sum_{y \sim x} \dfrac{1}{2}\big(u(y)-u(x)\big)\big(v(y)-v(x)\big).
\end{displaymath}
Write $\Gamma(u) = \Gamma(u,u)$ and 
\begin{displaymath}
	| \nabla u |(x) := \sqrt{\Gamma (u)(x)} =
	\Big( \sum_{y \sim x} \frac{1}{2} \big(u(y)-u(x)\big)^{2} \Big)^{1/2}.
\end{displaymath}
Let $\mu$ be the counting measure on $\mathbb{V}$, i.e., for any subset $A\subset \mathbb{V}$, $\mu(A):=\#\{x:x\in A\}$. For any function $f$ on $\mathbb{V}$, we write
\begin{displaymath}
	\int_{\mathbb{V}} f d \mu := \sum_{x\in \mathbb{V}} f(x) 
\end{displaymath}
whenever it makes sense.

The corresponding energy is defined as
\begin{displaymath}
	\mathcal{E} (f) := \int_{\mathbb{V}} 	| \nabla f |^{2} d \mu
	=\frac{1}{2}\sum_{x\in \mathbb{V}}\sum_{y \sim x}\big(f(y)-f(x)\big)^{2} .
\end{displaymath}
By definition, $H^{1}(\mathbb{V})$ is a Hilbert space with the inner product given by
\begin{displaymath}
\langle u,v\rangle:= \int_{\mathbb{V}} \Gamma(u,v) + uv \,d\mu.
\end{displaymath}

Let $\ell^{p}(\mathbb{V},\mu),p\in[1,\infty],$ be the Hilbert space of $\ell^{p}$ summable functions on $\mathbb{V}$ w.r.t. the measure $\mu$. We write $\|\cdot\|_{p}$ as the $\ell^{p}(\mathbb{V},\mu)$ norm, i.e., 
\begin{equation}\notag
\|u\|_{p}:=
\left\{ 	\begin{aligned}
		   & \sum_{x\in\mathbb{V}} |u(x)|^{p},\quad 1\leq p < \infty,\\
		    &\sup_{x\in\mathbb{V}} |u(x)|,\quad \quad  p=\infty.
	       \end{aligned}
\right.
\end{equation}
By definition, 
$$\|u\|_{H^{1}}^{2} = \int_{\mathbb{V}} \big(|\nabla u|^{2} + u^{2}\big) d\mu=\mathcal{E}(u)+\|u\|_{2}^{2},\\$$
and
\begin{equation}\nonumber
	\begin{aligned}
		\mathcal{E}(u) &=\frac{1}{2}\sum_{x\in \mathbb{V}}\sum_{y \sim x}\big(u(y)-u(x)\big)^{2} 
		\leq \sum_{x\in \mathbb{V}}\sum_{y \sim x}\big(u^{2}(x)+u^{2}(y)\big)\\
		&\leq C\|u\|_{2}^{2}.
	\end{aligned}
\end{equation}
Hence, there exists a constant $C^{'}>0$ such that
\begin{equation}\label{equiv}
	1/C^{'}\|u\|_{2}\leq \|u\|_{H^{1}} \leq C^{'}\|u\|_{2}.
\end{equation}
As usual, we write
$$\|u\|_{2} \simeq \|u\|_{H^{1}}$$
to indicate (\ref{equiv}).
For a bounded uniformly positive function $V: \mathbb{V} \to \mathbb{R}$, i.e. there exist $C_{1},C_{2}>0$, such that 
\begin{equation}\label{bdd ptt}
	C_{1} \leq V(x) \leq C_{2},\quad \forall x\in \mathbb{V},
\end{equation}
we can define an equivalent norm on $H^{1}(\mathbb{V})$ as
\begin{displaymath}
	\|u\|:= \Big(\int_{\mathbb{V}}|\nabla u|^{2} +Vu^{2} d\mu\Big)^{1/2}.
\end{displaymath} 
One can show that
$$
\|u\|  \simeq  \|u\|_{H^{1}}  \simeq  \|u\|_{2}.
$$
We say that $u\in C(\mathbb{V})$ is a weak solution of (\ref{maineq}) if 
\begin{displaymath}
	\int_{\mathbb{V}} \big( \Gamma(u,\phi)+Vu\phi \big) d \mu = \int_{\mathbb{V}} f(x,u) \phi\,d \mu , \quad \forall \, \phi \in H^{1}.
\end{displaymath}
Since $C_{c}(\mathbb{V})$ is dense in $H^{1}$, the integeral by parts gives 
\begin{displaymath}
	\int_{\mathbb{V}} \big( -\Delta u + Vu \big)\phi \, d \mu = \int_{\mathbb{V}} f(x,u) \phi \,d \mu , \quad \forall \, \phi \in C_{c}(\mathbb{V}).
\end{displaymath}
For any given $y \in \mathbb{V}$, one can take the test function $\phi = \delta_{y}(x)$ and obtain
$$	-\Delta u(y)+V(y)u(y)-f(y,u(y)) = 0.  $$
Now that $y\in \mathbb{V}$ is arbitrarily chosen, we have the following proposition.
\begin{proposition}
	If $u$ is a weak solution of (\ref{maineq}), then $u$ is a pointwise solution.  
\end{proposition}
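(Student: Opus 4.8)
The statement essentially formalizes the computation sketched in the paragraph just above it, so the plan is to turn that chain of implications into a short rigorous argument. First, since $C_{c}(\mathbb{V})\subset H^{1}(\mathbb{V})$, the defining identity of a weak solution,
\[
\int_{\mathbb{V}}\big(\Gamma(u,\phi)+Vu\phi\big)\,d\mu=\int_{\mathbb{V}}f(x,u)\phi\,d\mu,
\]
holds in particular for every $\phi\in C_{c}(\mathbb{V})$.

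Next I would prove the discrete Green (summation by parts) formula
\[
\int_{\mathbb{V}}\Gamma(u,\phi)\,d\mu=\int_{\mathbb{V}}(-\Delta u)\,\phi\,d\mu,\qquad \phi\in C_{c}(\mathbb{V}),
\]
for an arbitrary $u\in C(\mathbb{V})$. Writing the left side as $\tfrac12\sum_{x}\sum_{y\sim x}\big(u(y)-u(x)\big)\big(\phi(y)-\phi(x)\big)$, one notes that $\phi(y)-\phi(x)=0$ whenever neither $x$ nor $y$ belongs to the finite set $\operatorname{supp}\phi$; by local finiteness only finitely many edges carry a nonzero term, so the double sum is finite and may be rearranged by collecting, for each such edge, the contributions of its two endpoints. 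This rearrangement converts the sum into $\sum_{x}\big(-\Delta u(x)\big)\phi(x)$, which is the claimed formula. Substituting it into the weak identity gives $\int_{\mathbb{V}}\big(-\Delta u+Vu-f(x,u)\big)\phi\,d\mu=0$ for all $\phi\in C_{c}(\mathbb{V})$.

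Finally, for an arbitrary fixed $y\in\mathbb{V}$ I would take $\phi=\delta_{y}$, the indicator function of $\{y\}$, which lies in $C_{c}(\mathbb{V})$; the last identity then collapses to $-\Delta u(y)+V(y)u(y)-f(y,u(y))=0$. Since $y$ was arbitrary, $u$ solves the equation in (\ref{maineq}) pointwise. There is no serious obstacle here: the only step requiring attention is the interchange of summations in the Green formula, and it is justified purely by the finite support of $\phi$ together with the local finiteness of $G$ (the uniform degree bound (\ref{bdd geo}) is more than enough). One should also remark in passing that $\Delta u(y)$ is a finite sum for every $y$ and that $f(y,u(y))$ is meaningful by the assumed continuity of $f$ in its second variable, so every term of the pointwise equation is well defined.
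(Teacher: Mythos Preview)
Your proof is correct and follows essentially the same route as the paper: restrict the weak identity to compactly supported test functions, apply the discrete integration-by-parts formula to convert $\int_{\mathbb{V}}\Gamma(u,\phi)\,d\mu$ into $\int_{\mathbb{V}}(-\Delta u)\phi\,d\mu$, and then test with $\phi=\delta_{y}$. If anything, your write-up is more careful than the paper's, since you explicitly justify the rearrangement in the Green formula via the finite support of $\phi$ and local finiteness of $G$, and you correctly observe that only the inclusion $C_{c}(\mathbb{V})\subset H^{1}(\mathbb{V})$ (rather than density) is needed for this direction.
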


\section{Nehari Manifolds}
In this section, we introduce the functional framework of Nehari manifolds on graphs following the continuous setting in e.g. \cite{Andrzej,Euclidean4,Euclidean5,P.L.Lions} and the references therein. The proofs of the results will be given in the Appendix.

Let $G=(\mathbb{V},\mathbb{E})$ be a locally finite graph satisfying (\ref{bdd geo}), $V:\mathbb{V}\to\mathbb{R}$ be the potential function satisfying (\ref{bdd ptt}) and $f(x,u)$ satisfy (i)-(iv) in Theorem \ref{thm1} without assuming the T-periodicity of $f$ and $V$. We write $H^{1}=H^{1}(\mathbb{V})$. Define the variational functional as 
\begin{displaymath}
	\begin{aligned}\notag
		\Phi (u) &:= \frac{1}{2} \int_{\mathbb{V}} \big( |\nabla{u}|^{2} + V(x)u^{2} \big) d \mu - \int_{\mathbb{V}} F(x,u) d \mu \\
		&= : \frac{1}{2} \|u\|^{2} - I(u)
	\end{aligned}	
\end{displaymath}
\begin{lemma}\label{lem1}
	Each solution $u\in H^{1}$ of (\ref{maineq}) corresponds to a critical point of $\Phi$, i.e., $\Phi^{'}(u) = 0$. 
\end{lemma}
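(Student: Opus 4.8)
The plan is to verify that $\Phi$ is a well-defined $C^1$ functional on $H^1$ whose derivative acts by
\[
\Phi'(u)[\phi]=\int_{\mathbb{V}}\big(\Gamma(u,\phi)+V(x)u\phi\big)\,d\mu-\int_{\mathbb{V}}f(x,u)\phi\,d\mu,\qquad \phi\in H^1,
\]
so that $\Phi'(u)=0$ is exactly the weak form of (\ref{maineq}); combining this with the fact that $H^1$ solutions of (\ref{maineq}) and weak solutions coincide (one direction being the proposition of Section~2, the other a summation by parts against test functions in $C_c(\mathbb{V})$) then identifies every solution with a critical point of $\Phi$.

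First I would check that $\Phi$ is finite on $H^1$. The quadratic part $\tfrac12\|u\|^2$ is finite and smooth since $\|\cdot\|\simeq\|\cdot\|_{H^1}\simeq\|\cdot\|_2$ by (\ref{equiv}) and (\ref{bdd ptt}). Integrating the growth condition (\ref{g.c.}) gives $|F(x,u)|\le\tfrac a2 u^2+\tfrac aq|u|^q$, and because $\mathbb{V}$ carries the counting measure one has $\ell^2(\mathbb{V})\hookrightarrow\ell^r(\mathbb{V})$ with $\|u\|_r\le\|u\|_2$ for every $r\ge 2$ (in particular $\ell^2\hookrightarrow\ell^\infty$); hence $I(u)=\int_{\mathbb{V}}F(x,u)\,d\mu\le\tfrac a2\|u\|_2^2+\tfrac aq\|u\|_2^q<\infty$ for all $u\in H^1$. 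This is precisely the place where the discrete setting is more forgiving than the Euclidean one: no upper restriction on $q$ is needed, cf. Remark~\ref{rmk1}(2).

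Next I would compute the Gateaux derivative. For $u,\phi\in H^1$ and $|t|\le 1$ the map $t\mapsto F(x,u+t\phi)$ is $C^1$ with $\frac{d}{dt}F(x,u+t\phi)=f(x,u+t\phi)\phi$, and by (\ref{g.c.}) this is bounded in modulus by $a\big(|u|+|\phi|+(|u|+|\phi|)^{q-1}\big)|\phi|$, which is summable over $\mathbb{V}$: the linear terms by Cauchy--Schwarz, and $(|u|+|\phi|)^{q-1}|\phi|$ because $(|u|+|\phi|)^{q-1}\le(\|u\|_\infty+\|\phi\|_\infty)^{q-2}(|u|+|\phi|)$, again summable against $|\phi|$ by Cauchy--Schwarz. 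Dominated convergence then permits differentiation under the sum, so $I'(u)[\phi]=\int_{\mathbb{V}}f(x,u)\phi\,d\mu$; expanding $\|u+t\phi\|^2$ handles the quadratic part, and adding the two pieces yields the displayed formula for $\Phi'(u)$.

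To finish, the $C^1$-regularity follows once $u\mapsto\Phi'(u)$ is seen to be continuous from $H^1$ into $(H^1)^*$: the linear map $u\mapsto\langle u,\cdot\rangle_V$ is bounded, and if $u_n\to u$ in $H^1$ then $u_n\to u$ in $\ell^2$, hence pointwise with $\sup_n\|u_n\|_\infty<\infty$, so continuity of $f$ in the second variable together with the dominating function from (\ref{g.c.}) gives $f(\cdot,u_n)\to f(\cdot,u)$ in $\ell^2$, and thus $\sup_{\|\phi\|\le 1}\big|\int_{\mathbb{V}}(f(x,u_n)-f(x,u))\phi\,d\mu\big|\to 0$ by Cauchy--Schwarz and $\ell^2\hookrightarrow\ell^\infty$. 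Finally, if $u\in H^1$ solves (\ref{maineq}) pointwise, then summation by parts against any $\phi\in C_c(\mathbb{V})$ (valid since $\phi$ has finite support) gives $\Phi'(u)[\phi]=0$, and density of $C_c(\mathbb{V})$ in $H^1$ with $\Phi'(u)\in(H^1)^*$ upgrades this to $\Phi'(u)=0$. I do not expect a genuine obstacle here; the one slightly delicate point is the domination and continuity argument for the nonlinear term, which the embeddings $\ell^2\hookrightarrow\ell^r$, $r\ge 2$, available on graphs with bounded degree (consequences of (\ref{bdd geo}) and (\ref{equiv})) render routine.
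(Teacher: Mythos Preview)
Your proposal is correct and follows essentially the same approach as the paper: compute the G\^ateaux derivative of $\Phi$ by differentiating under the sum, justify the exchange via the growth condition (\ref{g.c.}) and the embeddings $H^{1}\hookrightarrow\ell^{r}$ for $r\ge 2$, and identify $\Phi'(u)=0$ with the weak formulation of (\ref{maineq}). If anything, you are more careful than the paper, which simply asserts that the embedding $H^{1}\hookrightarrow\ell^{q}$ allows interchange of derivative and summation and then states the derivative formula; your explicit dominating function, the $C^{1}$ continuity argument, and the summation-by-parts step from pointwise to weak solutions are all present only implicitly (or elsewhere, in Section~2) in the paper.
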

Denote the unit sphere in $H^{1}$ by $S$. Set $\mathcal{N}= \{ u\in H^{1}\backslash\{0\} : \Phi'(u)u=0 \}$. We call it Nehari manifold which is crucial for searching nontrivial critical points of $\Phi$. In the following Propositions \ref{prop1}-\ref{prop3}, we show that, for each $w\in S$, the ray $w\mapsto sw\,(s>0)$ has a unique intersection with $\mathcal{N}$. Besides, $\mathcal{N}$ is uniformly bounded away from zero. This allows us to establish a homeomorphic mapping between $\mathcal{N}$ and $S$, where $S$ is a Banach submanifold of $H^{1}$.
\begin{proposition}\label{prop1}
	Let $I(u) = \int_{\mathbb{V}} F(x,u) \, d \mu$ and $F(x,u)= \int_{0}^{u} f(x,s) \,d s $. Then\\
	(i) $I^{'}(u) = o(\|u\|) $ as $u \to 0$,\\
	(ii) $s \mapsto \frac{I^{'}(su)u}{s}$ is strictly increasing on $(0, \infty)$ for any $u \neq 0$,  and\\
	(iii)\,$\frac{I(su)}{s^{2}} \to \infty $ uniformly on weakly compact subsets of $H^{1} \backslash \{0\}$ as $s \to \infty$.
\end{proposition}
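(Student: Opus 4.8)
The plan is to verify the three assertions of Proposition \ref{prop1} by translating the hypotheses (i)--(iv) on $f$ (and hence on $F$) from Theorem \ref{thm1} into statements about $I$, exploiting the crucial fact that on graphs all $\ell^p$ norms are comparable to the $H^1$ norm via (\ref{equiv}), and in particular that $\|u\|_q \le C\|u\|_2 \simeq \|u\|$ for every $q \ge 2$. This is the structural feature that replaces the subcritical Sobolev inequality used in the Euclidean case.

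For part (i), I would differentiate $I$ to get $I'(u)\phi = \int_{\mathbb{V}} f(x,u)\phi\, d\mu$, then combine hypothesis (ii) of Theorem \ref{thm1} ($f(x,u)=o(u)$ as $u\to 0$) with the growth bound (\ref{g.c.}): given $\varepsilon>0$ there is $C_\varepsilon$ with $|f(x,u)|\le \varepsilon|u| + C_\varepsilon|u|^{q-1}$. Applying Hölder/Cauchy--Schwarz, $|I'(u)\phi| \le \varepsilon\|u\|_2\|\phi\|_2 + C_\varepsilon\|u\|_q^{q-1}\|\phi\|_q$, and then using $\|\cdot\|_q \le C\|\cdot\|_2 \simeq \|\cdot\|$ this becomes $|I'(u)\phi| \le (C'\varepsilon + C''\|u\|^{q-2})\|u\|\,\|\phi\|$, which gives $\|I'(u)\|_{(H^1)^*} = o(\|u\|)$ as $u\to 0$ since $q>2$.

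For part (ii), I would write $\frac{I'(su)u}{s} = \int_{\mathbb{V}} \frac{f(x,su)u}{s}\,d\mu = \int_{\mathbb{V}} \frac{f(x,su)}{|su|}\,u^2\,d\mu$ (handling the sign of $u(x)$ pointwise, and noting $u\ne 0$ so the set where $u(x)\ne 0$ is nonempty) and observe that for fixed $x$ with $u(x)\ne 0$, hypothesis (iii) of Theorem \ref{thm1} makes $s\mapsto \frac{f(x,su(x))}{|su(x)|}$ strictly increasing in $s>0$; summing over $x$ preserves strict monotonicity. For part (iii), given a weakly compact set $K\subset H^1\setminus\{0\}$, I would argue that $\inf_{w\in K}\|w\|_2 > 0$ (weak compactness plus the fact that the $\ell^2$ norm is weakly lower semicontinuous, so the infimum is attained and positive) and more importantly that there is a fixed $\delta>0$ and a fixed vertex-set such that $\sum_{x:\,|w(x)|\ge\delta} w(x)^2 \ge c > 0$ uniformly on $K$; then $\frac{I(sw)}{s^2} = \int_{\mathbb{V}}\frac{F(x,sw)}{(sw)^2}w^2\,d\mu \ge \sum_{x:\,|w(x)|\ge\delta}\frac{F(x,sw(x))}{(sw(x))^2}w(x)^2$, and hypothesis (iv) ($F(x,u)/u^2\to\infty$ uniformly in $x$ as $|u|\to\infty$, so for any $M$ there is $R_M$ with $F(x,u)/u^2\ge M$ whenever $|u|\ge R_M$) forces this to exceed $Mc$ once $s\delta \ge R_M$, uniformly in $w\in K$.

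The main obstacle is the uniformity in part (iii): one must extract, from abstract weak compactness of $K$, a single threshold $\delta$ and a lower bound $c$ for the ``mass of $w$ at height $\ge\delta$'' valid for all $w\in K$ simultaneously. I would handle this by contradiction --- if no such $(\delta,c)$ exists, pick $w_n\in K$ with $\sum_{|w_n(x)|\ge 1/n} w_n(x)^2 \to 0$; pass to a weakly convergent subsequence $w_n\rightharpoonup w$; weak convergence in $H^1\simeq\ell^2$ on a graph implies pointwise convergence $w_n(x)\to w(x)$ at each vertex, and one checks $w=0$ using the vanishing-mass condition together with a tightness/diagonal argument, contradicting $0\notin K$ (here weak closedness of $K$, or at least that the weak limit lies in the weak closure which is still bounded away from $0$, is used). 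The remaining computations --- differentiability of $I$, exchanging sum and derivative, the elementary monotonicity bookkeeping --- are routine given the $\ell^p$-comparability, so I would relegate them to the Appendix as the paper announces.
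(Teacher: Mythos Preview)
Your arguments for (i) and (ii) are essentially the paper's: the same $\varepsilon$--$C_\varepsilon$ splitting of $|f(x,u)|$ followed by H\"older and the norm equivalence $\|\cdot\|_q \lesssim \|\cdot\|_2 \simeq \|\cdot\|$ for (i), and the same pointwise monotonicity of $s\mapsto f(x,su)/(su)$ summed over $\mathbb{V}$ for (ii).

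For (iii) your proof is correct but organized differently from the paper's. The paper argues by a \emph{single} contradiction: assume the conclusion fails, pick $s_k\to\infty$ and $u_k\in\mathcal{W}$ with $\int F(x,s_ku_k)/s_k^2\le M$, pass to a weakly (hence pointwise) convergent subsequence $u_k\rightharpoonup u\in\mathcal{W}\setminus\{0\}$, choose a single vertex $x_0$ with $u(x_0)\neq 0$, and note that the single term $\frac{F(x_0,s_ku_k(x_0))}{(s_ku_k(x_0))^2}u_k(x_0)^2$ already blows up, contradicting the bound $M$. You instead first isolate an intermediate lemma---a uniform lower bound $\sum_{|w(x)|\ge\delta}w(x)^2\ge c$ on $K$---and then prove the divergence directly. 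Your contradiction sub-argument for that lemma is in fact the same single-vertex trick the paper uses (weak $\Rightarrow$ pointwise convergence, find $x_0$ with nonzero limit), so the two proofs share the same core idea; the paper's route is shorter because it avoids packaging the intermediate estimate, while yours yields a reusable statement about weakly compact subsets of $\ell^2\setminus\{0\}$. The phrase ``and a fixed vertex-set'' in your sketch is unnecessary and should be dropped---you never use it, and indeed no such finite set exists in general.
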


\begin{proposition}\label{prop2}
	 For any $ w\in H^{1}\backslash \{0\}$, define a function $\alpha_{w}(s):= \Phi(sw), s\in(0,\infty)$. We have the following:\\
	(i)\,For any given $w$, there exists $s_{w}>0$  such that  $\alpha'_{w}(s)>0$  for  $0<s<s_{w}$  and  $\alpha'_{w}(s)<0$ for $s>s_{w}$.\\
	(ii)\,There exists $\delta >0$ such that $s_{w} \geq \delta$ for all $w\in S$.\\
	(iii)\,For each compact subset $\mathcal{K} \subset S,$ there exists a constant $C_{\mathcal{K}}$ such that $s_{w} \leq C_{\mathcal{K}}$ for all $w \in \mathcal{K}.$
\end{proposition}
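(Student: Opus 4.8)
The plan is to transfer the whole statement to the behaviour of the one-variable function
\[
g_w(s) := \frac{I'(sw)w}{s}, \qquad s\in(0,\infty),
\]
to which Proposition \ref{prop1} applies directly. Since $\Phi\in C^1(H^1)$ and $\frac{d}{ds}I(sw)=I'(sw)w$, I would first record that
\[
\alpha_w'(s) = s\|w\|^2 - I'(sw)w = s\big(\|w\|^2 - g_w(s)\big), \qquad s>0 .
\]
By Proposition \ref{prop1}(ii) the map $g_w$ is strictly increasing on $(0,\infty)$, and it is continuous because $\Phi$ is $C^1$. By Proposition \ref{prop1}(i), $|I'(sw)w|\le \|I'(sw)\|\,\|w\| = o(\|sw\|)\|w\| = o(s)$ as $s\to 0^+$, so $g_w(0^+)=0$. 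If $g_w$ were bounded above by some $M$, then $I(sw)=\int_0^s I'(tw)w\,dt\le \tfrac12 M s^2$, contradicting Proposition \ref{prop1}(iii); hence $g_w(s)\to\infty$ as $s\to\infty$. Being strictly increasing and continuous from $0$ to $\infty$, $g_w$ attains the positive value $\|w\|^2$ at a unique point $s_w$, and the sign of $\alpha_w'(s)$ equals that of $\|w\|^2-g_w(s)$; this gives (i).

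For (ii) I would exploit the uniformity already contained in Proposition \ref{prop1}(i): there is $\delta>0$, independent of $w$, with $\|I'(u)\|\le\tfrac12\|u\|$ whenever $\|u\|\le\delta$. For $w\in S$ one has $\|sw\|=s$, so $g_w(s)\le \|I'(sw)\|/s\le\tfrac12<1=\|w\|^2$ for $0<s\le\delta$; thus $\alpha_w'>0$ on $(0,\delta]$ and therefore $s_w\ge\delta$.

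For (iii), writing $\alpha_w(s)=s^2\big(\tfrac12-\tfrac{I(sw)}{s^2}\big)$ for $w\in S$, I would apply Proposition \ref{prop1}(iii) to the compact---hence weakly compact---set $\mathcal K\subset S$, which avoids $0$: there is $R>0$ with $I(sw)/s^2>1$ for all $s\ge R$ and all $w\in\mathcal K$, so $\alpha_w(s)<0$ on $[R,\infty)$ for every $w\in\mathcal K$. On the other hand $\alpha_w(0^+)=\Phi(0)=0$ and $\alpha_w$ is increasing on $(0,s_w]$ by (i), so $\alpha_w(s_w)>0$. Comparing the two forces $s_w<R=:C_{\mathcal K}$.

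The main obstacle is the limit $g_w(s)\to\infty$ as $s\to\infty$: this is the one place where the monotonicity of Proposition \ref{prop1}(ii) must be combined with the super-quadratic growth of Proposition \ref{prop1}(iii), via the integral identity $I(sw)=\int_0^s I'(tw)w\,dt$. Once that is in hand, the existence and uniqueness of $s_w$ and the estimates in (ii)--(iii) are routine bookkeeping with the uniform bounds packaged in Proposition \ref{prop1}.
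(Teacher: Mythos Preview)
Your proof is correct and follows essentially the same route as the paper: both compute $\alpha_w'(s)=s\big(\|w\|^2-g_w(s)\big)$, use Proposition~\ref{prop1}(ii) for strict monotonicity of $g_w$, Proposition~\ref{prop1}(i) for the uniform lower bound $s_w\ge\delta$, and Proposition~\ref{prop1}(iii) on the compact set $\mathcal K$ for the upper bound. The only cosmetic difference is in part~(i): the paper obtains a sign change of $\alpha_w'$ by showing $\alpha_w(s)>0$ for small $s$ and $\alpha_w(s)\to-\infty$ (directly from $I(sw)/s^2\to\infty$), whereas you instead argue that $g_w(s)\to\infty$ via the integral bound $I(sw)=\int_0^s t\,g_w(t)\,dt\le \tfrac12 Ms^2$ and then invoke the intermediate value theorem---both arguments use Proposition~\ref{prop1}(iii) at the same point and are logically equivalent.
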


Define 
\begin{displaymath}
	\begin{aligned}
		\widehat{m} : H^{1}\backslash\{0\} &\to \mathcal{N} \\
	w &\mapsto 	\widehat{m}(w)=s_{w}w  
	\end{aligned}
\end{displaymath}
and
$$  \quad m := \widehat{m}|_{S} :S \to \mathcal{N}.$$ 
Based on the geometric features of $\Phi$ in the radial direction, we have the following observation.
\begin{proposition}  \label{prop3}

	The mapping $\,\widehat{m}$ is continuous and $m$ is a homeomorphism between $S$ and $\mathcal{N},$ with the inverse of $m$ given by $m^{-1}(u) = \frac{u}{\|u\|}$.
\end{proposition}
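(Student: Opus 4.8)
\textbf{Proof proposal for Proposition \ref{prop3}.}

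The plan is to verify the three claims in order: continuity of $\widehat m$, that $m$ is a continuous bijection $S\to\mathcal N$ with the stated inverse, and that this inverse is continuous, so that $m$ is a homeomorphism. The starting point is Proposition \ref{prop2}: for each $w\in H^1\setminus\{0\}$ the function $\alpha_w(s)=\Phi(sw)$ has a unique critical point $s_w>0$, which is a strict maximum, and $s_w$ depends only on the direction $w/\|w\|$ (homogeneity of the Nehari condition), with $s_w\geq\delta$ on $S$ and $s_w\leq C_{\mathcal K}$ on compact subsets $\mathcal K\subset S$. In particular $\widehat m(w)=s_w w$ lands in $\mathcal N$ by the defining property $\alpha_w'(s_w)=0$, i.e. $\Phi'(s_w w)(s_w w)=0$.

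First I would prove continuity of $\widehat m$. Take $w_n\to w$ in $H^1\setminus\{0\}$; by homogeneity we may assume $w_n,w\in S$. The sequence $s_{w_n}$ lies in $[\delta, C_{\mathcal K}]$ for a compact neighbourhood $\mathcal K$ of $w$ in $S$ (Proposition \ref{prop2}(ii)-(iii)), hence is bounded, so along any subsequence $s_{w_n}\to s_*\in[\delta,C_{\mathcal K}]$. Using that $\Phi$ and $\Phi'$ are continuous on $H^1$ (which follows from the growth condition (\ref{g.c.}) together with Proposition \ref{prop1}, exactly as in the continuous setting), passing to the limit in $\Phi'(s_{w_n}w_n)(s_{w_n}w_n)=0$ gives $\Phi'(s_*w)(s_*w)=0$, so $s_* = s_w$ by the uniqueness in Proposition \ref{prop2}(i). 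Since every subsequence has a further subsequence converging to the same limit $s_w$, the whole sequence converges: $s_{w_n}\to s_w$, hence $\widehat m(w_n)=s_{w_n}w_n\to s_w w=\widehat m(w)$. Restricting to $S$ shows $m$ is continuous.

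Next, $m$ is a bijection onto $\mathcal N$ with inverse $u\mapsto u/\|u\|$. Injectivity: if $m(w_1)=m(w_2)$ with $w_i\in S$, then $s_{w_1}w_1=s_{w_2}w_2$; taking norms gives $s_{w_1}=s_{w_2}$ (both positive), hence $w_1=w_2$. Surjectivity: given $u\in\mathcal N$, write $u=\|u\|\cdot(u/\|u\|)$ and set $w=u/\|u\|\in S$; then $sw$ hits $\mathcal N$ only at $s=s_w$ by Proposition \ref{prop2}(i) (the unique critical point of $\alpha_w$), and since $u=\|u\|w\in\mathcal N$ we must have $\|u\|=s_w$, i.e. $u=m(w)$. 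This computation also identifies $m^{-1}(u)=u/\|u\|$. Continuity of $m^{-1}$ is immediate since $u\mapsto u/\|u\|$ is continuous on $H^1\setminus\{0\}$ and $\mathcal N$ is bounded away from $0$ (by $s_w\geq\delta$, every $u\in\mathcal N$ satisfies $\|u\|\geq\delta$), so $m$ is a homeomorphism.

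The main obstacle is the continuity argument for $\widehat m$: one needs the continuity of $u\mapsto\Phi'(u)u$ on $H^1$, which requires controlling $\int_{\mathbb V} f(x,u)u\,d\mu$ under $H^1$-convergence. On graphs this is cleaner than in $\mathbb R^N$ because $H^1\hookrightarrow\ell^p$ for all $p\geq 2$ with $\|u\|_p\lesssim\|u\|_{H^1}$ (from (\ref{equiv}) and $\ell^2\subset\ell^p$), so the growth bound (\ref{g.c.}) gives the needed continuity of the nonlinear term directly via dominated convergence on $\ell^2$; the compactness input needed to rule out $s_{w_n}\to\infty$ or $s_{w_n}\to 0$ is supplied entirely by the uniform bounds in Proposition \ref{prop2}. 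I would record these continuity facts as a short lemma (or cite Proposition \ref{prop1}) and then the subsequence argument closes the proof.
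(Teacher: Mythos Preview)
Your proposal is correct and follows essentially the same route as the paper's proof: reduce to $w_n\in S$ by homogeneity, use the two-sided bounds of Proposition~\ref{prop2} to extract a convergent subsequence $s_{w_n}\to s_*>0$, identify $s_*=s_w$ by uniqueness, and conclude by the standard subsequence argument; the bijection and inverse are handled identically. The only cosmetic difference is that the paper phrases the identification step as ``$\mathcal N$ is closed, hence $s_*w\in\mathcal N$'', whereas you pass to the limit directly in $\Phi'(s_{w_n}w_n)(s_{w_n}w_n)=0$ using continuity of $\Phi'$---these are the same fact.
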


Define the functional $\widehat{\Psi}:  H^1 \setminus\{0\} \to \mathbb{R} $ as $\widehat{\Psi}(w) := \Phi(\widehat{m}(w))$
and $\Psi:=\widehat{\Psi } |_{S} = \Phi \circ m $. One can prove that $w\in S$ is a critical point of $\Psi$ if and only if  $u=s_{w}w \in \mathcal{N}$ is a critical point of $\Phi$.

\begin{proposition}\label{prop4}
	We have
	$$\widehat{\Psi} =
	\Phi \circ \widehat{m} \in C^{1}\big( H^{1}\backslash\{0\},\mathbb{R} \big)$$
	and 
	$$  \widehat{\Psi } '(w)z= \ \frac{\| \widehat{m}(w)  \|}{\|w\|}\widehat{\Phi} '(\widehat{m}(w) )z, 
	\quad  \quad \forall w,z\in H^1 \backslash \{0\}. $$
\end{proposition}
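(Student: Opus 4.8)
The plan is to follow the scheme of \cite{Andrzej} and establish the formula by a two‑sided (squeeze) estimate on the difference quotient of $\widehat{\Psi}$. Recall from Propositions \ref{prop2} and \ref{prop3} that $\widehat{m}(w)=s_w w$, where $s_w>0$ is the \emph{unique} maximum point of $\alpha_w(s):=\Phi(sw)$ on $(0,\infty)$, and that $\widehat{m}$ — hence also $w\mapsto s_w=\|\widehat{m}(w)\|/\|w\|$ — is continuous on $H^1\setminus\{0\}$. I would also record at the outset that $\Phi\in C^1(H^1,\mathbb R)$, so that $\Phi'\colon H^1\to(H^1)^{\ast}$ is continuous: the quadratic term $\tfrac12\|\cdot\|^2$ is smooth, and $I\in C^1$ follows in a routine way from the growth condition \eqref{g.c.} via the embedding $H^1(\mathbb V)\hookrightarrow\ell^2(\mathbb V)$ (this already underlies Proposition \ref{prop1}(i)).

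Fix $w,z\in H^1\setminus\{0\}$ and, for $|t|$ small, set $\sigma=\sigma(t):=s_{w+tz}$, so that $\widehat{\Psi}(w+tz)=\Phi(\sigma(w+tz))$ and $\widehat{\Psi}(w)=\Phi(s_w w)$. The key step uses maximality twice: $\Phi(\sigma w)\le\Phi(s_w w)$, because $\sigma w$ lies on the ray $\mathbb R_{>0}w$ on which $s_w$ is maximal, and $\Phi(s_w(w+tz))\le\Phi(\sigma(w+tz))$, because $s_w(w+tz)$ lies on the ray $\mathbb R_{>0}(w+tz)$ on which $\sigma$ is maximal. Combined with the identity $\widehat{\Psi}(w+tz)-\widehat{\Psi}(w)=\Phi(\sigma(w+tz))-\Phi(s_w w)$, this yields
\begin{displaymath}
\Phi\big(s_w(w+tz)\big)-\Phi(s_w w)\ \le\ \widehat{\Psi}(w+tz)-\widehat{\Psi}(w)\ \le\ \Phi\big(\sigma(w+tz)\big)-\Phi(\sigma w).
\end{displaymath}
Next I would apply the mean value theorem to the $C^1$ maps $r\mapsto\Phi(s_w w+r s_w z)$ and $r\mapsto\Phi(\sigma w+r\sigma z)$ to rewrite the two outer terms as $t s_w\,\Phi'(s_w w+\theta_t t s_w z)z$ and $t\sigma\,\Phi'(\sigma w+\eta_t t\sigma z)z$ for suitable $\theta_t,\eta_t\in(0,1)$, and then divide by $t$ (reversing the inequalities when $t<0$). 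This sandwiches $\big(\widehat{\Psi}(w+tz)-\widehat{\Psi}(w)\big)/t$ between $s_w\,\Phi'(s_w w+\theta_t t s_w z)z$ and $\sigma\,\Phi'(\sigma w+\eta_t t\sigma z)z$.

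To conclude, I would let $t\to0$: continuity of $w\mapsto s_w$ gives $\sigma(t)\to s_w$, hence both $s_w w+\theta_t t s_w z\to s_w w$ and $\sigma w+\eta_t t\sigma z\to s_w w$ in $H^1$, and continuity of $\Phi'$ then forces both bounding quantities to converge to $s_w\,\Phi'(s_w w)z$. Therefore $\widehat{\Psi}$ is G\^ateaux differentiable at each $w\in H^1\setminus\{0\}$, with
\begin{displaymath}
\widehat{\Psi}'(w)z=s_w\,\Phi'(s_w w)z=\frac{\|\widehat{m}(w)\|}{\|w\|}\,\Phi'\big(\widehat{m}(w)\big)z,
\end{displaymath}
which extends to all $z\in H^1$ by linearity and, since $\|\widehat{m}(w)\|/\|w\|=s_w$, is precisely the asserted identity. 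Finally, since $w\mapsto s_w$, $w\mapsto\widehat{m}(w)$ and $\Phi'$ are all continuous, the map $w\mapsto\widehat{\Psi}'(w)$ is continuous from $H^1\setminus\{0\}$ into $(H^1)^{\ast}$; a functional with continuous G\^ateaux derivative is Fr\'echet differentiable, so $\widehat{\Psi}\in C^1(H^1\setminus\{0\},\mathbb R)$.

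I expect the only delicate point to be the bookkeeping in the squeeze step — keeping each maximality inequality tied to the correct ray and tracking the sign of $t$ through the division — after which the limit is forced by the continuity of $\widehat{m}$ (Proposition \ref{prop3}) and the $C^1$ regularity of $\Phi$. If the latter (equivalently $I\in C^1$) has not already been set down, supplying that from \eqref{g.c.} is the one genuinely computational ingredient, but it is standard.
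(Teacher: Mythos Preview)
Your proof is correct and follows the standard Szulkin--Weth squeeze argument, which is \emph{not} what the paper does. The paper's own proof applies the chain rule formally: it writes
\[
\widehat{\Psi}'(w)z=\Phi'\big(\widehat{m}(w+tz)\big)\Big|_{t=0}\cdot\frac{d}{dt}\Big|_{t=0}\widehat{m}(w+tz)
\]
and then asserts $\frac{d}{dt}\big|_{t=0}s_{w+tz}(w+tz)=s_w z$, i.e.\ it tacitly treats $t\mapsto s_{w+tz}$ as differentiable although only continuity of $\widehat{m}$ was proved in Proposition~\ref{prop3}. (The extra term $\big(\frac{d}{dt}s_{w+tz}\big)w$ that a genuine product rule would produce is ultimately harmless, since $\Phi'(\widehat{m}(w))w=0$ on $\mathcal{N}$, but the paper does not spell this out nor justify the differentiability.) Your two-sided estimate sidesteps this issue entirely: by invoking maximality on each ray you never need to differentiate $s_w$, only to know that $\sigma(t)\to s_w$, which is precisely the content of Proposition~\ref{prop3}. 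The price is a slightly longer argument and the mild bookkeeping with the sign of $t$ that you already flagged; the gain is that the proof is actually complete as written, and it makes transparent why the derivative formula holds without any smoothness of $\widehat{m}$.
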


\begin{corollary}
	When restricted on $S$, 
	$$\Psi = \widehat{\Psi} \big|_{S} \in C^{1}(S,\mathbb{R})$$and $$\Psi'(w)z = \|m(w)\| \Phi'\big(m(w)\big)z, \quad \forall z \in T_{w}S.$$
\end{corollary}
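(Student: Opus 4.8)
The plan is to obtain the Corollary by restricting the statement of Proposition \ref{prop4} to the unit sphere $S$, using only standard facts about $C^{1}$ functionals on Banach submanifolds, so that all the analytic content is already carried by Proposition \ref{prop4}.

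First I would record that $S=\{u\in H^{1}:\|u\|=1\}$ is a $C^{\infty}$ Banach submanifold of $H^{1}$ not containing the origin. Indeed, $N(u):=\|u\|^{2}$ is smooth on $H^{1}$, being a bounded positive-definite quadratic form, and its derivative $N'(u)$ is a nonzero --- hence surjective onto $\mathbb{R}$ --- bounded linear functional for every $u\neq 0$; thus $1$ is a regular value, $S=N^{-1}(1)$ is a submanifold, and $T_{w}S=\ker N'(w)$ for each $w\in S$. In particular $S\subset H^{1}\backslash\{0\}$, so by Proposition \ref{prop4} the functional $\widehat{\Psi}$ is defined and of class $C^{1}$ on an open neighbourhood of $S$.

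Next, the inclusion $\iota\colon S\hookrightarrow H^{1}\backslash\{0\}$ is a smooth map of Banach manifolds whose differential $d\iota_{w}\colon T_{w}S\to H^{1}$ is the canonical embedding. Hence, by the chain rule, $\Psi=\widehat{\Psi}\circ\iota=\widehat{\Psi}\big|_{S}\in C^{1}(S,\mathbb{R})$, and for $w\in S$ and $z\in T_{w}S$ one gets
$$\Psi'(w)z=\widehat{\Psi}'(w)\big(d\iota_{w}z\big)=\widehat{\Psi}'(w)z=\frac{\|\widehat{m}(w)\|}{\|w\|}\,\Phi'\big(\widehat{m}(w)\big)z.$$
Since $w\in S$ forces $\|w\|=1$ and $\widehat{m}(w)=m(w)$, the right-hand side equals $\|m(w)\|\,\Phi'\big(m(w)\big)z$, which is exactly the claimed formula.

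There is no genuine obstacle here beyond Proposition \ref{prop4}, whose proof is deferred to the Appendix; the remaining ingredients --- that a regular level set is a submanifold with tangent space equal to the kernel of the defining differential, and that restricting a $C^{1}$ functional to such a submanifold produces a $C^{1}$ functional whose differential is the restriction of the ambient differential to the tangent space --- are classical and may simply be invoked.
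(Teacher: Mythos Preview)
Your proof is correct and follows exactly the approach implicit in the paper: the Corollary is stated without a separate proof because it is obtained by restricting Proposition~\ref{prop4} to $S$, using $\|w\|=1$ and $\widehat{m}|_{S}=m$. You have simply made explicit the standard submanifold facts (that $S$ is a regular level set and that restriction of a $C^{1}$ functional is $C^{1}$ with differential given by restriction to the tangent space) that the paper leaves unspoken.
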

Recall that a sequence $\{u_{n}\} \subset H^{1}$ is called a \textit{Palais-Smale sequence} if $\{ \Phi(u_{n}) \}$ is bounded and $ \Phi'(u_{n}) \to 0$. If $\Phi(u_{n}) \to c $ and $\Phi^{'}(u_{n}) \to 0$, we call $\{u_{n}\}$ a $(PS)_{c}-sequence$.

\begin{proposition}\label{prop5}
	(i) If  $\{ w_{n} \}\subset S$ is a Palais-Smale sequence for $\Psi$, then $\{ m(w_{n})\}$ is a Palais-Smale sequence for $\Phi$. If $\{u_{n}\}$ is a bounded Palais-Smale sequence for $\Phi$, then $\{m^{-1}(u_{n})\}$ is a Palais-Smale sequence for $\Psi$.\\
	(ii) $w$ is a critical point of $\Psi$ if and only if $m(w)$ is a nontrivial critical point of $\Phi$. 
\end{proposition}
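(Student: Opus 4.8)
The plan is to reduce both statements to the differential identity recorded in the corollary following Proposition~\ref{prop4}, namely $\Psi'(w)z=\|m(w)\|\,\Phi'(m(w))z$ for $w\in S$ and $z\in T_{w}S$ (itself a restriction of the formula in Proposition~\ref{prop4}), combined with two facts already available: every $u\in\mathcal{N}$ satisfies $\Phi'(u)u=0$, and $\|m(w)\|=s_{w}\geq\delta>0$ for all $w\in S$ by Proposition~\ref{prop2}(ii). The only geometric input needed is the orthogonal decomposition $H^{1}=T_{w}S\oplus\mathbb{R}w$ with respect to the inner product inducing $\|\cdot\|$; the Nehari constraint annihilates the radial component of $\Phi'(m(w))$, so controlling $\Psi'(w)$ on $T_{w}S$ is the same as controlling $\Phi'(m(w))$ on all of $H^{1}$.

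\textbf{Part (ii).} I would fix $w\in S$, set $u=m(w)=s_{w}w\in\mathcal{N}$, and note $\Phi'(u)w=0$ (from $\Phi'(u)u=0$ and $s_{w}>0$). Writing $z\in H^{1}$ as $z=z_{0}+tw$ with $z_{0}\in T_{w}S$ gives $\Phi'(u)z=\Phi'(u)z_{0}=\|u\|^{-1}\Psi'(w)z_{0}$, so $\Phi'(m(w))=0$ in $(H^{1})^{*}$ if and only if $\Psi'(w)=0$ in $T_{w}^{*}S$; nontriviality is automatic since $m(w)\in\mathcal{N}\subset H^{1}\setminus\{0\}$. For the reverse reading, starting from a nontrivial critical point $u$ of $\Phi$ one has $\Phi'(u)u=0$, hence $u\in\mathcal{N}$, hence $w:=m^{-1}(u)=u/\|u\|\in S$ with $m(w)=u$ by Proposition~\ref{prop3}, and the same identity applies.

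\textbf{Part (i).} Given a Palais--Smale sequence $\{w_{n}\}\subset S$ for $\Psi$, I would set $u_{n}=m(w_{n})\in\mathcal{N}$; then $\Phi(u_{n})=\Psi(w_{n})$ is bounded, and for $z\in H^{1}$ split as $z=z_{0}+\langle z,w_{n}\rangle w_{n}$ with $z_{0}\in T_{w_{n}}S$, $\|z_{0}\|\leq\|z\|$, the identity together with $\Phi'(u_{n})w_{n}=0$ yields
\[
\|\Phi'(u_{n})\|_{(H^{1})^{*}}\ \leq\ \|u_{n}\|^{-1}\,\|\Psi'(w_{n})\|_{T_{w_{n}}^{*}S}\ \leq\ \delta^{-1}\,\|\Psi'(w_{n})\|_{T_{w_{n}}^{*}S}\ \longrightarrow\ 0.
\]
Conversely, given a bounded Palais--Smale sequence $\{u_{n}\}\subset\mathcal{N}$ for $\Phi$, I would set $w_{n}=m^{-1}(u_{n})\in S$ (so $m(w_{n})=u_{n}$ by Proposition~\ref{prop3}), use $\Psi(w_{n})=\Phi(u_{n})$ for boundedness, and estimate $|\Psi'(w_{n})z|=\|u_{n}\|\,|\Phi'(u_{n})z|\leq(\sup_{n}\|u_{n}\|)\,\|\Phi'(u_{n})\|_{(H^{1})^{*}}\|z\|\to 0$ for $z\in T_{w_{n}}S$.

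I do not anticipate a real obstacle: once Proposition~\ref{prop4} and its corollary are in place this is essentially bookkeeping. The two things that need attention are (a) the splitting $H^{1}=T_{w}S\oplus\mathbb{R}w$ together with the vanishing of the radial derivative on $\mathcal{N}$ --- this is what upgrades a tangential bound to a full-space bound --- and (b) keeping the scaling factors $\|m(w_{n})\|=s_{w_{n}}$ under control: bounded below by $\delta$ (Proposition~\ref{prop2}(ii)) in the first implication of (i), so that one may divide by $\|u_{n}\|$, and bounded above (the hypothesis that $\{u_{n}\}$ is bounded) in the reverse implication. I would also remark that $m^{-1}$ is literally defined only on $\mathcal{N}$, so the Palais--Smale sequence in the reverse implication of (i) is understood to lie on $\mathcal{N}$.
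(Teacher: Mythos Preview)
Your proposal is correct and follows the same overall strategy as the paper: split $H^{1}=T_{w}S\oplus\mathbb{R}w$, kill the radial part of $\Phi'(m(w))$ using the Nehari constraint, and transfer tangential estimates via the identity $\Psi'(w)z=\|m(w)\|\,\Phi'(m(w))z$ together with the lower bound $\|m(w)\|\geq\delta$ from Proposition~\ref{prop2}(ii).

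The one genuine difference is in how the projection onto $T_{w}S$ is controlled. You observe directly that the splitting is orthogonal with respect to the inner product inducing $\|\cdot\|$, so $\|z_{0}\|\leq\|z\|$ by Pythagoras and the projection has norm $1$. The paper instead treats the problem as if in a general Banach space (following the abstract Szulkin--Weth framework): it writes $|t|=|\varphi'(w)(z+tw)|\leq\|\varphi'(w)\|\,\|z+tw\|$ and then spends several lines estimating $\|\varphi'(w)\|$ by hand, arriving at a bound $\|z\|\leq(1+C')\|z+tw\|$ and hence the two-sided inequality $\|\Psi'(w)\|\leq\|u\|\,\|\Phi'(u)\|\leq(1+C')\|\Psi'(w)\|$. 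Your Hilbert-space shortcut gives the same conclusion with constant $1$ instead of $1+C'$, which is cleaner; the paper's route has the (unused here) advantage of porting verbatim to non-Hilbert settings. Either way the Palais--Smale transfer and the critical-point equivalence follow identically.
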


\section{The proofs of Theorem 1.1 }
In this section, we prove Theorem \ref{thm1}. Firstly, we are concerned with the properties of the lattice graph $\mathbb{Z}^{N}$. It's worth pointing out that we haven't used the periodicity of $f$ and $V$ yet. 
\begin{proof}[Proof of Theorem \ref*{thm1}]
Set $c:= \inf_S \Psi$ and take a minimizing sequence $ \{ w_{n} \}\subset S$, i.e., $\Psi(w_{n}) \to c$. By Ekeland's Variational Principle, see \cite{Struwe,EK.LY,EK.2}, we may assume that  $\Psi'(w_{n}) \to 0$, then $\{w_{n}\}$ is a $(PS)_{c}-sequence$ for $\Psi$. 

	Let $u_{n} = m(w_{n}) \in \mathcal{N},$ we claim that $\{\|u_{n}\|\} $ is uniformly bounded. By Proposition \ref{prop5}, we know that
	$\Phi'(u_{n})\to 0$
	and 
	$$ \Phi(u_{n})=\Psi(w_{n})\to c \equiv \inf_{S} \Psi=\inf_{\mathcal{N}} \Phi, \quad \text{where  }   u_{n}=m(w_{n}).$$
	Suppose that $\{\|u_{n}\|\}$ is unbounded, then, by passing to a subsequence, we may assume
	that $\|u_{n}\| \to \infty $ as $n\to \infty.$ Set $v_{n}:=\frac{u_{n}}{\|u_{n}\|} \in S$, then it has a weakly convergent subsequence 
	$v_{n} \rightharpoonup v$ in $H^{1}(\mathbb{Z}^{N})$, for $S$ is weakly compact. \\
	For any $\phi \in C_{c}(\mathbb{Z}^{N}),$
	$$\langle v_{n},\phi\rangle \, \to \,\langle v,\phi\rangle.$$By taking $\phi=\delta_{x}$, one can see that $\{v_{n}\}$ is pointwise convergent, i.e., $v_{n}(x) \to v(x),\, \forall x \in \mathbb{Z}^{N}.$ 
	
	We claim that $\{ v_{n} \}$ doesn't converge to 0 in $\ell^{q}$ $(q>2)$. Suppose that $v_{n} \to 0$ in $\ell^{q}(\mathbb{Z}^{N})$. For any $\epsilon>0$, there exists  $C_{\epsilon}>0$ such that, for all $s>0,$
	\begin{displaymath}
		\begin{aligned}
			\Big| \int_{\mathbb{Z}^{N}} F(x,sv_{n}) d \mu \Big| 
			& \leq  \int_{\mathbb{Z}^{N}} \big| \int_{0}^{sv_{n}}  f(x,t) dt \big| d\mu  \\
			& \leq  \int_{\mathbb{Z}^{N}} |sv_{n}|\Big(  \epsilon \big| sv_{n} \big| + C_{\epsilon} \big|sv_{n}\big|^{q-1} \Big) \,d\mu  \\
			&\leq \epsilon s^{2} \|v_{n}\|^{2} + C_{\epsilon} s^{q } \|v_{n}\|_{q}^{q}\\
			&\to \epsilon s^{2} \quad \text{ as } n\to \infty.
		\end{aligned}
	\end{displaymath}
	Letting $\epsilon \to 0^{+}$, we get
	$$\Big| \int_{\mathbb{Z}^{N}} F(x,sv_{n}) d \mu \Big|  \to 0 \quad \quad 
	\text{as } n\to \infty .$$
	Hence,
	$$\Phi(sv_{n}) = \frac{1}{2} s^{2} - \int_{\mathbb{Z}^{N}} F(x,sv_{n}) d \mu \, \to \,\frac{1}{2} s^{2} ,  \quad \forall s>0.$$
	By definition, $\Phi(sv_{n}) \leq \Phi(u_{n})$ for all $s>0$. Besides, $\Phi(u_{n}) \to c$ as $n \to \infty.$  Hence,
	$$c+1 \geq \lim_{n\to \infty} \Phi(sv_{n}) = \frac{1}{2} s^{2}, \quad \forall s>0.$$
	Choosing $s> \sqrt{2(c+1)}$, we get a contradition. Therefore, $v_{n}  \nrightarrow 0$ in $\ell^{q}$, i.e.,  $$\varlimsup_{n} \|v_{n}\|_{q} = c >0 $$ for some constant $c>0$.
	By interpolation inequality, we have
	$$ \|v_{n}\|_{q} \leq \|v_{n}\|_{2}^{\frac{2}{q}} \cdot\|v_{n}\|_{\infty}^{\frac{q-2}{q}}.	$$
	Taking the limit from both sides, one can see
	$$\varlimsup_{n} \|v_{n}\|_{q} 
	\leq \varlimsup_{n} \|v_{n}\|_{2}^{\frac{2}{q}}\|v_{n}\|_{\infty}^{1-\frac{2}{q}}.$$
	Since 
	$$\|v_{n}\|_{2} \leq \|v_{n}\| =1\quad \text{and}\quad
	\varlimsup_{n} \|v_{n}\|_{q} = c >0, $$
	we have 
	$$  \varlimsup_{n} \|v_{n}\|_{\infty}^{1-\frac{2}{q}} \geq	\varlimsup_{n}\|v_{n}\|_{2}^{\frac{2}{q}}\|v_{n}\|_{\infty}^{1-\frac{2}{q}}\geq c >0.$$
	Hence, there exists a subsequence $\{v_{n}\}$ and a sequence $\{ y_{n}\} \subset \mathbb{Z}^{N}$ such that $|v_{n}(y_{n})| \geq c$ for each $n$. By translations, we define $\widetilde{v}_{n}:=v_{n}(y+k_{n}T)$ with $k_{n}=(k^{1}_{n},\dots k^{N}_{n})$ to ensure that $\{y_{n}-k_{n}T\} \subset \Omega$ where $\Omega=[0,T)^{N}\bigcap \mathbb{Z}^{N}$ is a bounded domain in $\mathbb{Z}^{N}$. Then, for each $\widetilde{v}_{n},$
	$$\|\widetilde{v}_{n}\|_{\ell^{\infty}(\Omega)} \geq |v_{n}(y_{n})| \geq c >0.$$
	Since $V(x)$ and $f(x,u)$ are both T-periodic in $x$, $\Phi, \Phi'$ and $\mathcal{N}$ are invariant under the translation. Moreover, we have
	$$\Psi(\widetilde{v}_{n}) \to c,\quad \Psi'(\widetilde{v}_{n}) \to 0,$$
	$$\Phi(\widetilde{u}_{n}) \to c,\quad \Phi'(\widetilde{u}_{n}) \to 0.$$
	We may assume that $\|v_{n}\|_{\ell^{\infty}(\Omega)} \geq c >0.$
	Since $\Omega$ is bounded, there exists at least one point, say $x_{0}\in \Omega $, such that $v_{n}(x_{0}) \to v(x_{0}) \geq c >0$. Therefore, $|u_{n}(x_{0})|=\|u_{n}\||v_{n}(x_{0})| \to \infty$ as $n \to \infty.$ We have
	\begin{displaymath}
		\begin{aligned}
			0<\dfrac{\Phi(u_{n})}{\|u_{n}\|^{2}} 
			&=\dfrac{1}{2} - \int_{\mathbb{V}} \dfrac{F(x,u_{n})}{\|u_{n}\|^{2}} d\mu \\
			&\leq \dfrac{1}{2} - \dfrac{F(x_{0},u_{n})}{u_{n}^{2}(x_{0})} v_{n}^{2}(x_{0}) \, \to -\infty\quad \text{as } n \to \infty.
		\end{aligned}
	\end{displaymath}
	It is a contradiction, so that $\{u_{n}\}$ is bounded.
For bounded $\{u_{n}\}\subset H^{1}(\mathbb{Z}^{N})$, one has a subsequence $u_{n} \rightharpoonup u$ in $H^{1}(\mathbb{Z}^{N})$. By above discussions, it is pointwise convergent. For any $w \in C_{c}(\mathbb{Z}^{N})$,
\begin{displaymath}
	\begin{aligned}
		\Phi'(u_{n})w &= \,\langle u_{n},w\rangle - \int_{\mathbb{Z}^{N}} f(x,u_{n})w \, d\mu \\
		&\to \,\langle u,w\rangle - \int_{\mathbb{Z}^{N}} f(x,u)w \, d\mu \\
		&=	\Phi'(u)w. 		
	\end{aligned}
\end{displaymath}
We have $\Phi'(u)=0$, for $\Phi'(u_{n}) \to 0$. 
If $u\neq0$, then $u$ is a critical point of $\Phi$; otherwise, one can get a nontrivial critical point of $\Phi$ by translation.
In the case of $u=0$, suppose $u_{n} \to 0$ in $\ell^{q}(\mathbb{Z}^{N})$. For any $\epsilon >0$, there exists $C_{\epsilon}>0$ such that 
\begin{displaymath}
	\begin{aligned}
		\big| \int_{\mathbb{Z}^{N}} f(x,u_{n}) u_{n} d\mu \big| 
		&\leq \int_{\mathbb{Z}^{N}} |f(x,u_{n})||u_{n}| d\mu \\
		&\leq \int_{\mathbb{Z}^{N}} \big(\epsilon|u_{n}|+C_{\epsilon}|u_{n}|^{q-1}\big)|u_{n}| d\mu \\
		&=\epsilon\|u_{n}\|_{2}^{2}+C_{\epsilon}\|u_{n}\|^{q}_{q}
	\end{aligned}
\end{displaymath}
Since $ \|u_{n}\|_{2} \simeq \|u_{n}\|$, one can choose $\epsilon$ sufficiently small such that
\begin{displaymath}
	\begin{aligned}
		\Phi'(u_{n})u_{n}&=\|u_{n}\|^{2}-\int_{\mathbb{Z}^{N}} f(x,u_{n})u_{n} d\mu \\
		&\geq \frac{1}{2} \|u_{n}\|^{2}-C_{\epsilon}\|u_{n}\|_{q}^{q}.
	\end{aligned}
\end{displaymath}
Letting $n\to \infty$, then $\|u_{n}\|_{q} \to 0$ and  $\Phi'(u_{n})\to 0$ imply $\|u_{n}\| \to 0.$  This contradicts the fact that $\{u_{n}\} \subset \mathcal{N}$ is bounded away from $0$. Hence, 
$u_{n}  \nrightarrow 0$ in $\ell^{q}(\mathbb{Z}^{N}).$
By translation again, one can get a sequence $\{\widehat{u}_{n}\}$ such that
$$\|\widehat{u}_{n}\|_{\ell^{\infty}(\Omega)} \geq c >0,\quad,\forall n\in \mathbb{N},$$
for some constant $c>0$.
Since $\Phi, \Phi'$ and $\mathcal{N}$ are invariant under translation, then $\{\widehat{u}_{n}\} \subset \mathcal{N}$ and 
$$\Phi(\widehat{u}_{n})\to c,\quad 
\Phi'(\widehat{u}_{n})\to 0. $$ 
By passing to a subsequence, we have
\begin{equation}\notag
	\begin{aligned}
		&\widehat{u}_{n} \rightharpoonup \widehat{u} \quad \text{in }  H^{1}(\mathbb{Z}^{N}), \\
		&\widehat{u}_{n}(x)\to \widehat{u}(x),\quad\forall x\in \mathbb{Z}^{N},
	\end{aligned}
\end{equation}
and $\widehat{u} \neq 0.$ By the same argument, one has
$\Phi'(\widehat{u})=0,$ i.e., $\widehat{u}$ is a nontrivial critical point of $\Phi$ and $\widehat{u}\in\mathcal{N}$. 

Finally, we prove that the nontrivial critical point $u$ (or $\widehat{u}$) satisfies$$\Phi(u)=\inf_{\mathcal{N}}\Phi \equiv c.$$ We have an elementary estimate that, for any $u\neq 0$, $$F(x,u)\leq\frac{1}{2}f(x,u)u.$$ Combined with Fatou's Lemma, we have
\begin{displaymath}
	\begin{aligned}
		c+o(1) &= \Phi(u_{n}) -\frac{1}{2}\Phi'(u_{n})u_{n} \\
		&=\int_{\mathbb{Z}^{N}} \big( \frac{1}{2}f(x,u_{n})u_{n}-F(x,u_{n}) \big) d\mu \\
		&\geq \int_{\mathbb{Z}^{N}} \big( \frac{1}{2}f(x,u)u-F(x,u) \big) d\mu + o(1) \\
		&=\Phi(u)-\frac{1}{2}\Phi'(u)u +o(1)\\
		&=\Phi(u) + o(1),\quad \text{ as } n\to \infty.
	\end{aligned}
\end{displaymath}
Hence, $\Phi(u) \leq c \equiv \inf_{\mathcal{N}} \Phi.$ 
So that $\Phi(u)=c\equiv \inf_{\mathcal{N}}\Phi.$
\end{proof}
\begin{remark}
	Once we know $v_{n}  \nrightarrow 0$ in $\ell^{q}(\mathbb{R}^{N})$, one can use Concentration-Compactness Principle in the Euclidean space by P.L.Lions \cite{P.L.Lions, lions2, Struwe} to get a sequence $\{y_{n}\} \subset \mathbb{R}^{N}$ such that
	$$\int_{B_{r}(y_{n})} v_{n}^{2}\geq \delta$$
	for some $\delta>0$. 
	Inspired by this, we prove a similar result on lattice graphs to overcome the loss of compactness. In fact, we can find a sequence $\{y_{n}\}\subset \mathbb{Z^{N}}$ such that, for each $n\in \mathbb{N}$, $$|u_{n}(y_{n})| \geq c$$ for some constant $c>0$. This is sufficient for our proof.
\end{remark}

\section{Proof of  Theorem \ref{thm2} and Theorem \ref{thm3}}
    In this section, we first prove Theorem \ref{thm2} and then give a short proof for Theorem \ref{thm3}.
    \begin{proof}[Proof of Theorem \ref{thm2}]
	Consider the limit equation $ -\Delta u + V_{\infty}u = f(u)$
	and define its associated energy functional as
	$$\Phi_{\infty}(u):=\frac{1}{2}\int_{\mathbb{Z}^{N}} \big( |\nabla{u}|^{2} + V_{\infty}u^{2} \big) d \mu - \int_{\mathbb{Z}^{N}} F(u) d \mu .$$
	Note that the ground state energy of $\Phi$ can be characterized as 
	$$c=\inf_{\mathcal{N}} \Phi(u) 
	=\inf_{w\in H^{1} \backslash \{0\}}\max_{s>0} \Phi(sw). $$
	We denote the corresponding value for $\Phi_{\infty}$ by 
	$$c_{\infty}=\inf_{\mathcal{N_{\infty}}} \Phi_{\infty}(u) 
	= \inf_{w\in H^{1} \backslash \{0\}}\max_{s>0} \Phi_{\infty}(sw).$$
	Then $c_{\infty} \geq c$, for $V_{\infty} = \sup_{\mathbb{Z}^{N}} V(x).$ If $V(x)\equiv V_{\infty}$, then this is a special case of periodic potential. Otherwise, $V(x)<V_{\infty}$ on some subset of $\mathbb{V}$. By Theorem \ref{thm1}, we know that $c_{\infty}$ can be attained at some point $u\in \mathcal{N_{\infty}}$, i.e., $\Phi_{\infty}(u)=c_{\infty}.$  Then, for any $s>0$, 
	$$c_{\infty}=\Phi_{\infty}(u)\geq\Phi_{\infty}(su)>\Phi(su),$$ i.e.,
	$$c_{\infty}>\max_{s>0}\Phi(su)\geq \inf_{u\in H^{1} \backslash \{0\}}\max_{s>0}\Phi(su)\equiv c.$$
	As before, one can choose a minimizing sequence $\{w_{n}\}$ for $\Psi$ and assume that $\Psi'(w_{n}) \to 0$. Then we have $\Phi(u_{n}) \to c$ and $\Phi'(u_{n})\to 0$ with $u_{n}:= m(w_{n})$. Suppose $\|u_{n}\| \to \infty$. Letting $v_{n} = \dfrac{u_{n}}{\|u_{n}\|}$, by passing to a subsequence, 
	we have $v_{n} \rightharpoonup v$ in $H^{1}(\mathbb{Z}^{N})$ and $v_{n}(x) \to v(x)$ for all $x\in\mathbb{Z^{N}}$. A similar argument yields that $v_{n} \nrightarrow 0$ in $\ell^{q}(\mathbb{Z}^{N}).$ Again, by interpolation inequality, one has a subsequence $\{v_{n}\}$ and corresponding $\{y_{n}\}\subset \mathbb{Z}^{N}$ such that $|v_{n}(y_{n})| \geq \delta >0$ for all $n.$ If $\{y_{n}\}$ is bounded, we get a contradiction similar to the proof of Theorem \ref{thm1}. Otherwise, there exists a subsequence $|y_{n}| \to \infty$. Let $\widetilde{v}_{n}(x) := v_{n}(x-y_{n})$.  Then $\{\widetilde{v}_{n}\}$ is bounded, for $\|v_{n}\|=1.$ By passing to a subsequence, one can assume that $\widetilde{v}_{n} \rightharpoonup \widetilde{v}$ in $H^{1}(\mathbb{Z}^{N})$ and $\widetilde{v}_{n}(x) \to \widetilde{v}(x)$ for all $x\in\mathbb{Z^{N}}$ with $\widetilde{v} \neq 0.$ Hence,
	\begin{displaymath}
		\begin{aligned}
			0 \leq \dfrac{\Phi(u_{n})}{\|u_{n}\|^{2}}
			&=\frac{1}{2} - \int_{\mathbb{Z}^{N}} \dfrac{F(u_{n})}{u_{n}^{2}} v_{n}^{2} \,d\mu \\
			&=\frac{1}{2} - \int_{\mathbb{Z}^{N}} \dfrac{F(\widetilde{u}_{n})}{\widetilde{u}_{n}^{2}} \widetilde{v}_{n}^{2} \,d\mu \to -\infty,\quad \text{ as } n\to\infty.\\
		\end{aligned}
	\end{displaymath} 
	This is a contradiction, so that $\{\|u_{n}\|\}$ is bounded.
	
	By similar arguments, we obtain a subsequence $\{u_{n}\}$ and a corresponding sequence $\{z_{n}\} \subset \mathbb{Z}^{N}$ such that $|u_{n}(z_{n})| \geq \eta >0$ for all $n$. Therefore, $\widehat{u}_{n}\rightharpoonup \widehat{u} \neq 0$ with $\widehat{u}_{n}(x) := u_{n}(x-z_{n})$. It suffices to prove that $\{z_{n}\}$ is bounded.
	Suppose $|z_{n}| \to \infty$ as $n\to \infty$, we claim that $\Phi'_{\infty}(\widehat{u}) =0$. 
	In fact, for any $w\in C_{c}(\mathbb{Z}^{N})$, letting $w_{n}(x):=w(x-z_{n})$. By Ekeland's Variational Principle, we have
	\begin{displaymath}
		\begin{aligned}
			|\Phi'(u_{n})w_{n}|\leq \|\Phi'(u_{n})\|_{(H^{1})^{*}}\|w_{n}\|
			= \|\Phi'(u_{n})\| _{(H^{1})^{*}}\|w\| \to 0,\quad \text{ as } n\to \infty.
		\end{aligned}
	\end{displaymath} 
	 Moreover,
	\begin{displaymath}
		\begin{aligned}
			\Phi'(u_{n})w_{n}
			&= \int_{\mathbb{Z}^{N}} \Gamma(u_{n},w_{n})(x) + V(x)u_{n}(x)w_{n}(x)\, d\mu -\int_{\mathbb{Z}^{N}} f(u_{n})w_{n}(x)\, d\mu  \\
			&=\int_{\mathbb{Z}^{N}} \Gamma(\widehat{u}_{n}, w)(x) + V(x-z_{n})\widehat{u}_{n}(x)w(x)\, d\mu -\int_{\mathbb{Z}^{N}} f(\widehat{u}_{n})w(x)\, d\mu  \\
			&\to \int_{\mathbb{Z}^{N}} \Gamma(\widehat{u},w)(x) + V_{\infty}(x)\widehat{u}(x)w(x)\, d\mu -\int_{\mathbb{Z}^{N}} f(\widehat{u})w(x)\, d\mu  \\
			&=\Phi'_{\infty}(\widehat{u})w.
		\end{aligned}
	\end{displaymath} 
	Hence,
	\begin{equation}\notag
		\begin{aligned}
			c+o(1) &= \Phi(u_{n}) - \frac{1}{2}\Phi'(u_{n})u_{n}\\
			&=\int_{\mathbb{Z}^{N}} \Big( \frac{1}{2}f(u_{n})u_{n} - F(u_{n}) \Big) \,d\mu \\
			&=\int_{\mathbb{Z}^{N}} \Big( \frac{1}{2}f(\widehat{u}_{n})\widehat{u}_{n} - F(\widehat{u}_{n}) \Big) \,d\mu \\
			&\geq \int_{\mathbb{Z}^{N}} \Big( \frac{1}{2}f(\widehat{u})\widehat{u}- F(\widehat{u}) \Big) \,d\mu  +o(1)\\
			&=\Phi_{\infty}(\widehat{u})-\frac{1}{2}\Phi'_{\infty}(\widehat{u})\widehat{u}+o(1)\\
			&=\Phi_{\infty}(\widehat{u})+o(1)\\
			&\geq c_{\infty}+o(1),\quad n\to \infty.
		\end{aligned}
	\end{equation}
	This is a contradiction, so that $\{y_{n}\}$ is bounded.
 \end{proof}
Next, we prove the results on quasi-transitive graphs.
 \begin{proof}[Proof of Theorem \ref{thm3}]
 	Let $G/ \Gamma =\{\rho_{1},\cdots,\rho_{m}\}$ be the set of finitely many orbits,	$\Omega=\{z_{1},\cdots,z_{m}\}\subset\mathbb{V}$ where $z_{i}\in\rho_{i}, 1\leq i \leq m.$ Replace the translations in the proof of Theorem \ref{thm1} and \ref{thm2} by $\Gamma$-action on functions respectively. For example, suppose $|v_{n}(y_{n})|\geq \delta>0$. For each $y_{n}$, there exists $g_{n}\in\Gamma$ such that $g_{n}y_{n}=z_{i}\in \Omega$. Set $\widetilde{v}_{n}:=v_{n}\circ g_{n}^{-1}$, then we get a subsequence $\{\widetilde{v}_{n}\}$ such that $\|\widetilde{v}_{n}\|_{\ell^{\infty}(\Omega)} \geq \delta$ with $\Omega$ being a finite subset. By similar arguments, one can complete the proof on quasi-transitive graph $G$.
 \end{proof}
 
\section{Appendix}
In this section, we prove the propositions in Section 3.

\begin{proof}[Proof of Lemma \ref{lem1}]
	For any $u,v \in H^{1}(\mathbb{V}),$
	\begin{displaymath}
		\begin{aligned}
			\Phi ^{'}(u)v 
			&= \frac{1}{2} \frac{d}{d t} \Big|_{t=0} \big( \|u+tv\|^{2} \big) - \frac{d}{d t}\Big|_{t=0} I(u+tv)\\
			&=\,\langle u , v \rangle -  \frac{d}{d t}\Big|_{t=0} \Big( \sum_{x\in\mathbb{V}} F(x,u+tv)  \Big).  \\
		\end{aligned}
	\end{displaymath}
	In fact, since $u,v\in H^{1}(\mathbb{V}), \, H^{1}(\mathbb{V})\hookrightarrow  \ell^{q}(\mathbb{V}),  \, (q>2) $, one can exchange the order of derivative and summation. Then, we have
	$$
	\frac{d}{d t}\Big|_{t=0} \Big( \sum_{x\in\mathbb{V}} F(x,u+tv)   \Big) 
	= \sum_{x\in\mathbb{V}} \frac{d}{d t}\Big|_{t=0} F(x,u+tv) 
	=\sum_{x\in\mathbb{V}} f(x,u)v .  
	$$
	Hence,
	\begin{displaymath}
		\begin{aligned}
			\Phi ^{'}(u)v 
			&=\, \langle u , v \rangle - \sum_{x\in\mathbb{V}} f(x,u)v   \\
			&= \int_{\mathbb{V}} \Big( \Gamma(u,v)+ Vuv  \Big) d \mu - \int_{\mathbb{V}} f(x,u)v \,d \mu. \\
		\end{aligned}
	\end{displaymath}
	We conclude that $\Phi^{'}(u) = 0$ if and only if $u$ is a weak solution of (\ref{maineq}).
\end{proof}

\begin{proof}[Proof of Proposition \ref{prop1}]
	(i) For any given $\epsilon>0, f(x,u) = o(u)$ as $u \to 0$ and the growth condition imply that there exists $C_{\epsilon} > 0$ such that
	$$|f(x,u)| \leq \epsilon |u| + C_{\epsilon}|u|^{q-1}. $$
	Hence,
	\begin{displaymath}
		\begin{aligned}
			\dfrac{I^{'}(u)w}{\|u\|}
			&\leq \dfrac{1}{\|u\|} \int_{\mathbb{V}} \big( \epsilon |u| |w| + C_{\epsilon} |u|^{q-1}|w| \big) d \mu \\
			&\leq \dfrac{\epsilon}{\|u\|} \|u\|_{2}\|w\|_{2} 
			+ \dfrac{C_{\epsilon}}{\|u\|} \|u\|_{q}^{q-1}\|w\|_{q}                \\
			&\leq \epsilon \|w\|_{2} + C_{\epsilon} \|u\|^{q-2}\|w\|_{q},  \quad (q>2),		
		\end{aligned}
	\end{displaymath}
	where the second inequality follows from H\"{o}lder's inequality.
	Hence, 
	$$ \lim_{\|u\| \to 0} \dfrac{I^{'}(u)w}{\|u\|} \leq \epsilon \|w\|_{2},
	\quad  \forall w\in H^{1}(\mathbb{V}),$$
	Letting $\epsilon \to 0^{+}$, we have
	$$ \lim_{\|u\| \to 0} \dfrac{I^{'}(u)w}{\|u\|} = 0,\quad  \forall w\in H^{1}(\mathbb{V}).$$
	Hence,  $I'(u) = o(\|u\|)$. \\  
	(ii) By assumption, we know that $u\mapsto \frac{f(x,u)}{u}$ is strictly decreasing on $(-\infty, 0)$ and increasing on $(0, \infty)$. Since
	\begin{displaymath}
		\dfrac{I'(su)u}{s} = \int_{\mathbb{V}} \dfrac{f(x,su)u}{s} = \int_{\mathbb{V}} \dfrac{f(x,su)u^{2}}{su} ,
	\end{displaymath}
	and one can check that $s \mapsto \frac{f(x,su)}{su}$ is increasing on $(0,\infty)$ for any  $u\neq0$, the assertion is proved.	\\
	(iii) For any weakly compact subset $\mathcal{W} \subset H^{1}(\mathbb{V}) \backslash \{0\},$ it suffices to show that, for any $M>0,$ there exists $K>0$ such that 
	\begin{displaymath}
		\int_{\mathbb{V}} \dfrac{F(x,su)}{s^{2}} d \mu > M, \quad \forall u\in \mathcal{W},
	\end{displaymath}
holds for any $s>K$.
	Suppose it is not ture, there exist $M>0$, a sequence $s_{k} \to \infty$ and $\{u_{k}\} \subset \mathcal{W}$ such that
	\begin{displaymath}
		\int_{\mathbb{V}} \dfrac{F(x,s_{k}u_{k})}{s_{k}^{2}} d \mu \leq M.
	\end{displaymath}
	Since $\mathcal{W}$ is weakly compact, passing to a subsequence if necessary, $\{u_{n}\}$ is weakly convergent in $\mathcal{W} \subset H^{1}(\mathbb{V}) \backslash \{0\}.$ By choosing special test functions, we have $u_{k}(x) \to u(x)$ for all $x\in\mathbb{V}$  with $u\in \mathcal{W}\subset H^{1}(\mathbb{V}) \backslash \{0\}.$ That is, $u_{k}(x_{0}) \to u(x_{0}) \neq 0$ for some $x_{0} \in \mathbb{V}.$ One can see that $F$ is nonnegative. Hence,
	\begin{displaymath}
		\begin{aligned}
			M &\geq \int_{\mathbb{V}} \dfrac{F(x,s_{k}u_{k})}{s_{k}^{2}} d \mu \\
			&\geq \dfrac{F(x_{0},s_{k}u_{k}(x_{0}))}{s_{k}^{2}u_{k}^{2}(x_{0})} u_{k}^{2}(x_{0}) \, \to \infty \quad \text{ as }\, k\to \infty.
		\end{aligned}
	\end{displaymath}
This is a contradiction.
\end{proof}

\begin{proof}[Proof of Proposition \ref{prop2}]
	(i) Since
	$$\Phi(sw)= \frac{1}{2}\|w\|^{2}s^{2} - I(sw)$$
	and
	$$ \lim_{s\to 0^{+}} \dfrac{I(sw)}{\frac{1}{2}\|w\|^{2}s^{2} }
	=\lim_{s\to 0^{+}} \dfrac{I'(sw)w}{\|w\|^{2}s}
	=\lim_{s\to 0^{+}} \dfrac{I'(sw)}{\|sw\|}\dfrac{w}{\|w\|}=0,
	\quad \forall w\neq 0,
	$$
	we have  $\alpha_{w} (s) = \Phi(sw) >0$ when $s>0$ is suffciently small. \\
	Moreover,
	$$ \lim_{s\to \infty} \dfrac{I(sw)}{\frac{1}{2}\|w\|^{2}s^{2} }
	=\infty$$
	implies $$\alpha_{w} (s) = \Phi(sw) \to -\infty \, \,\,\text{as} \, s\to \infty.$$
	Consider its derivative 
	$$\alpha'_{w}(s)= \frac{d\,}{ds}\Phi(sw)=\|w\|^2s-I'(sw)w
	=s\Big(\|w\|^2-\frac{I'(sw)w}{s}\Big).$$
	Since $s \mapsto \dfrac{I'(sw)}{s}w$ is strictly increasing on $(0,\infty),$ i.e., $\alpha'_{w}(s)$ is strictly decreasing on $(0,\infty)$, there exists a unique $s_{w} >0$ such that $\alpha'_{w}(s)>0$ on $(0,s_{w})$ and $\alpha'_{w}(s)<0$ on  $(s_{w},\infty).$ \\
	(ii) We know that $s_{w}$ is the unique root for
	$$ \alpha'_{w}(s) = s \big( 1- \frac{I'(sw)}{s}w\big)=0.$$
	By $I'(u)= o(\|u\|) $ as $u \to 0$, we can choose small $\delta>0 $ such that
	$$\dfrac{I'(sw)w(x)}{s} \leq \dfrac{I'(sw)}{\|sw\|} < \frac{1}{2}, \quad  \,\, \forall s<\delta. $$
	i.e., $\alpha'_{w}(s)>\frac{1}{2} s >0$ for $s\in (0,\delta).$ So that $s_{w}\geq \delta$ for all $w\in S.$\\
	(iii) Moreover, from (i), one can see that $s_{w}$ is the unique maximum of $\alpha_{w}(s)$ attained before its zero point. For $w \in S, \,\alpha_{w}(s)=s^{2}\big( \frac{1}{2}-\frac{I(sw)}{s^{2}} \big)$ and $\frac{I(sw)}{s^{2}} \to \infty$ uniformly about $u \in \mathcal{K}$ as $s \to \infty.$
	We can choose $C_{\mathcal{K}}>0$ such that $\frac{I(sw)}{s^{2}} \geq 1$  for
	$s\geq C_{\mathcal{K}}$, $w \in \mathcal{K}$. Therefore, $\alpha_{w}(s)\leq -\frac{1}{2} C_{\mathcal{K}}^{2}<0$ whenever $s\geq C_{\mathcal{K}}$, i.e., $s_{w} \leq C_{\mathcal{K}} \text{ for all } w \in \mathcal{K}$.
\end{proof}

\begin{proof}[Proof of Proposition \ref{prop3}]
	(i) Suppose  that $w_{n} \to w \neq 0.$ Since $\widehat{m}(tw) = \widehat{m}(w)$ for each $t>0,$ we may assume that $\{w_{n}\} \subset S$. Set $\widehat{m}(w_{n}) = s_{n}w_{n}$. By Proposition \ref{prop2}, we know that $\{s_{n}\}$ is bounded away from 0. Hence, by taking a subsequence, we have $s_{n} \to \bar{s} >0$ and $\widehat{m}(w_{n}) \to \bar{s}w.$ Since $\mathcal{N}$ is closed, $\bar{s}w \in \mathcal{N}.$
	This implies that $\bar{s} = s_{w}$ and $\widehat{m}(w_{n}) \to \bar{s}w = s_{w}w = \widehat{m}(w)$.\\
	(ii) On one hand, $m : S \to \mathcal{N}$ is surjective, for $m(\frac{u}{\|u\|}) = u$ for each $u \in \mathcal{N}.$ On the other hand, if $w_{1}, w_{2}\in S$ and $m(w_{1})=m(w_{2}),$ then $s_{1}w_{1}=s_{2}w_{2}$. We have $s_{1}=s_{2}$, which implies $w_{1} = w_{2}$, i.e., $m$ is injective. Hence $m$ has an inverse mapping $m^{-1} : \mathcal{N} \to S.$ Denote the mapping $h: \mathcal{N} \to S ,\, u\mapsto  h(u)=\frac{u}{\|u\|}$. Then $h(m(w))=w=\text{id}(w)$ for all $w \in S$.
	Hence $m^{-1} = h, $ which is continuous.
\end{proof}

\begin{proof}[Proof of Proposition \ref{prop4}]
By definition,
\begin{displaymath}
		\begin{aligned}
			\widehat{\Psi}'(w)z
			&=\dfrac{d\,}{dt}\Big|_{t=0} \widehat{\Psi}(w+tz) \\
			&= \dfrac{d\,}{dt}\Big|_{t=0} \Phi(\widehat{m}(w+tz))\\
			&= \Phi'(\widehat{m}(w+tz))\Big|_{t=0} \cdot \dfrac{d\,}{dt}\Big|_{t=0} \widehat{m}(w+tz).
		\end{aligned}
	\end{displaymath}
	Since $\Phi \in C^{1}\big( H^{1}(\mathbb{V}),\mathbb{R} \big)$ and $\widehat{m}(w+tz) = s_{w+tz}(w+tz)$ is a continuous mapping, we have
	$$\Phi'\big(\widehat{m}(w+tz)\big)\Big|_{t=0}=\Phi'(\widehat{m}(w))$$
	and
	$$\dfrac{d\,}{dt}\Big|_{t=0} \widehat{m}(w+tz) = \dfrac{d\,}{dt}\Big|_{t=0} s_{w+tz}(w+tz)= s_{w}z.$$
Hence,	$$\widehat \Psi'(w)z = \Phi'(\widehat{m}(z))s_{w}z = s_{w}\Phi'(\widehat{m}(w))z = \frac{\| \widehat{m}(w)  \|}{\|w\|}\widehat{\Phi} '(\widehat{m}(w) )z.$$
\end{proof}

\begin{proof}[Proof of Proposition \ref{prop5}]
	Define a functional $\varphi : H^{1}(V) \to \mathbb{R}$,\, $\varphi(u) = \frac{1}{2} \|u\|^{2}$. Then 
	\begin{displaymath}
		\begin{aligned}
			D\varphi \equiv \varphi' : H^{1}(\mathbb{V})&\to(H^{1} )^{*}\\
			u  &\mapsto D\varphi(u) ,
		\end{aligned}
	\end{displaymath}
where
	\begin{displaymath}
	\begin{aligned}
		D\varphi(u) : H^{1} &\to \mathbb{R}\\
		v &\mapsto D_{v}\varphi(u) = \,<u,v>.
	\end{aligned}
\end{displaymath}
	Hence, $\varphi \in  C^{1}(H^{1}(\mathbb{V}),\mathbb{R})$ and $\varphi'(w)w=\|w\|^{2}=1$ for each $w\in S.$ Therefore, for each $w\in S,$ we have direct sum decomposition:
	$$H^{1}(\mathbb{V})=T_{w}S\oplus \mathbb{R}_{w}.$$
	Define the projection mapping as
	\begin{displaymath}
		\begin{aligned}
			P_{w} : H^{1}(\mathbb{V}) &\to T_{w}S \\
			z+tw&\mapsto z.
		\end{aligned}
	\end{displaymath}
	First of all, we check that $\{\|P_{w}\|\}$ has a uniform bound, where 
	$$ \|P_{w}\| = \sup_{z+tw\neq 0} \dfrac{\|P_{w}(z+tw)\|}{\|z+tw\|}.$$
	Note that
	$$\quad \|P_{w}(z+tw)\|=\|z\| \leq \|z+tw\| + |t|,\,(\|w\|=1).$$
	Moreover, for any $z\in T_{w}S := \{u:\varphi'(w)(u)=0\}$ and $\|w\| = 1$, we have $\varphi'(w)(z+tw)=\varphi'(w)z+t\varphi'(w)w=t$.  Therefore, 
	$$ |t| \leq \| \varphi'(w) \|\|z+tw\|.$$
	It suffices to show that $\{ \| \varphi'(w)\| \}$ has a uniform bound for all $w\in S.$
	Recall that
	$$\|\varphi'(w)\| = \sup \Big\{ |\varphi'(w)v|:\|v\|=1 \Big\}.$$
	and
	\begin{equation}\label{secd}
		\begin{aligned}
			\big|\varphi'(w)v\big|
			&=\big| \sum_{x\in\mathbb{V}}w(x)v(x)+\frac{1}{2}\sum_{x\in\mathbb{V}}\sum_{y\sim x}\big( w(y)-w(x) \big) \big( v(y)-v(x) \big) \big| \\
			 &\leq \|w\|_{2}\|v\|_{2}+\frac{1}{2}\sum_{x\in\mathbb{V}}\sum_{y\sim x}\big|w(y)-w(x)\big|\big( |v(y)|+|v(x)|\big).
		\end{aligned}
	\end{equation}
	For the second term in the last line of (\ref{secd}),
	\begin{displaymath}
	\begin{aligned}
		&\frac{1}{2}\sum_{x\in\mathbb{V}}\sum_{y\sim x}\big|w(y)-w(x)\big|\big( |v(y)|+|v(x)|\big)\\
		&\leq \sum_{x\in\mathbb{V}}\sum_{y\sim x} |w(y)||v(x)|
		+\sum_{x\in\mathbb{V}}\sum_{y\sim x} |w(x)||v(x)| \\
		&\leq \big(\sum_{x\in\mathbb{V}}\sum_{y\sim x} |w(y)|^{2}\big)^{1/2}\big(\sum_{x\in\mathbb{V}}\sum_{y\sim x} |v(x)|^{2}\big)^{1/2}
		+C\|w\|_{2}\|v\|_{2}\\
		&\leq 2C\|w\|_{2}\|v\|_{2}.
	\end{aligned}
\end{displaymath}
	Therefore,
	$$\|z\| \leq \|z+tw\|+|t|\|w\| \leq (1+C')\|z+tw\|,$$
	where $C'$ is a positive constant.
	We show that
	\begin{equation}\label{*}
		\| \Psi'(w)\| \leq \|u\|\|\Phi'(u)\| \leq (C'+1)\|\Psi'(w)\|,
	\end{equation}
	where $w\in S,\,u=m(w)\in \mathcal{N} \text{ and }\|u\| \geq \delta >0.$\\
	On one hand, 
	\begin{displaymath}
		\begin{aligned}
			\| \Psi'(w)\|  &= \sup_{\mbox{\tiny$\begin{array}{c}
						z\in T_{w}S \\
						\|z\|=1
					\end{array}$}} | \Psi'(w)z|    \\
			&= \sup_{\mbox{\tiny$\begin{array}{c}
						z\in T_{w}S \\
						\|z\|=1
					\end{array}$}} \big| \Phi'(m(w))z \big|\cdot \|m(w)\|    \\
			&=  \sup_{\mbox{\tiny$\begin{array}{c}
						z\in T_{w}S \\
						\|z\|=1
					\end{array}$}} \big| \Phi'(u)z \big|\cdot \|u\|    \\
			&\leq \|u\|\cdot\|\Phi'(u)\|
		\end{aligned}               
	\end{displaymath}
	On the other hand, 
	\begin{displaymath}
		\begin{aligned}
			\|u\|\| \Phi'(u)\|  &=\|u\| \sup_{\mbox{\tiny$\begin{array}{c}
						z+tw \neq 0 \\
						z\in T_{w}S 
					\end{array}$}} \dfrac{\big| \Phi'(u)(z+tw)\big| }{\|z+tw\|}   \\
			&\leq \|u\|(1+C')\sup_{z\in T_{w}S} \dfrac{\big|\Phi'(u)z\big|}{\|z\|}.   
		\end{aligned}               
	\end{displaymath}
	Since $\Phi'(u)w = \frac{1}{\|u\|} \Phi'(u)u = 0$ and $u=m(w),$ we have
	\begin{displaymath}
		\begin{aligned}
			\|u\|\| \Phi'(u)\|  &=(1+C')\sup_{z\in T_{w}S} \dfrac{\|m(w)\| \big| \Phi'(m(w))z\big| }{\|z\|}  \\
			&=(1+C') \sup_{z\in T_{w}S}\dfrac{\big|\Psi'(w)z\big|}{\|z\|}   \\
			&=(1+C') \| \Psi'(w)\|.
		\end{aligned}               
	\end{displaymath}
	If $\{u_{n}\}$ is a bounded \textit{Palais-Smale sequence} for $\Phi$, that is, $\Phi'(u_{n})\to 0$ and $ \delta \leq \|u_{n}\| \leq C$  for some constant $0<\delta<C,$  then $\Psi(w_{n})\equiv \Psi\big( \frac{u_{n}}{\|u_{n}\|} \big)$ is bounded as well. Moreover,
	$$\|\Psi'(w_{n})\| \leq \|u_{n}\|\|\Phi'(u_{n})\| \leq C\|\Phi'(u_{n})\|  \to 0,\quad n\to \infty.$$ 
	We conclude that $\{w_{n}\}$ is a bounded \textit{Palais-Smale sequence} for $\Psi$.
	By (\ref{*}) one can see that $\|\Psi'(w_{0})\| = 0 $ if and only if $\|\Phi'(u_{0})\| = 0$ with $u_{0} = m(w_{0}).$ By definition, $\Phi(u_{0}) = \Psi(w_{0})$. Since $\Phi \big|_{\mathcal{N}} >0$ is bounded away from below, it has an infimum and $\inf_{\mathcal{N}} \Phi= \inf_S \Psi $.
\end{proof}

\textbf{Acknowledgements.}
The authors would like to thank Zhang Peng for helpful discussions and suggestions.

\bibliography{ref}
\bibliographystyle{alpha}

\end{document}